\documentclass[a4paper]{amsart}
\usepackage{amsmath,amscd,amssymb}
\numberwithin{equation}{section}
\usepackage[varg]{txfonts}
\usepackage[all]{xy}
\makeatletter\@ifundefined{coloneqq}{}\makeatother
\newcommand{\ndeg}{\mathit{{D}}}
\newcommand{\hdef}{\mathit{{H}}}
\newcommand{\Z}{\mathbb{Z}}
\newcommand{\R}{\mathbb{R}}
\newcommand{\C}{\mathbb{C}}
\newcommand{\HH}{\mathbb{H}}

\newcommand{\ind}{\mathop{\mathrm{ind}}\nolimits}
\newcommand{\rank}{\mathop{\mathrm{rk}}\nolimits}

\newcommand{\imm}{\mathop{\mathrm{Cob}}\nolimits}
\newcommand{\Imm}{\mathop{\mathrm{Imm}}\nolimits}

\newcommand{\SO}{\mathop{\mathit{SO}}\nolimits}
\newcommand{\co}{\mathpunct{\colon}}
\newcommand{\ST}{\mathpunct{;}\,\,}
\newcommand{\pa}{\partial}
\renewcommand{\Re}{\operatorname{Re}}
\newtheorem{thm}{Theorem}[section]

\newtheorem{lem}[thm]{Lemma}
\newtheorem{prop}[thm]{Proposition}
\theoremstyle{definition}
\newtheorem{defn}[thm]{Definition}
\newtheorem{rem}[thm]{Remark}
\date{}
\title[Singular Seifert surfaces and Smale invariants]
{Singular Seifert surfaces and Smale invariants for a family of 3-sphere immersions}
\author{Tobias Ekholm and Masamichi Takase}
\address{Tobias Ekholm: 
Department of mathematics, Uppsala University, Box 480, 751 06 Uppsala, Sweden.}
\email{tobias@math.uu.se}
\address{Masamichi Takase: 
Department of Mathematical Sciences, Faculty of Science, Shinshu University, Matsumoto, 390-8621 Japan.}
\email{takase@math.shinshu-u.ac.jp}
\thanks{TE acknowledges support from the G{\"o}ran Gustafsson Foundation for Research in Natural Sciences and Medicine. MT is partially supported by the Grant-in-Aid for Scientific Research (C), JSPS, Japan}
\subjclass[2000]{57R45 (primary); 57R42, 55Q45, 57M99, 57R90 (secondary)}
\begin{document}\sloppy
\maketitle

\begin{abstract}
A self-transverse immersion of the $2$-sphere into $4$-space with algebraic number of self intersection points equal to $-n$ induces an immersion of the circle bundle over the $2$-sphere of Euler class $2n$ into $4$-space. Precomposing these circle bundle immersions with their universal covering maps, we get for $n>0$ immersions $g_{n}$ of the $3$-sphere into $4$-space. In this note, we compute the Smale invariants of $g_n$. The computation is carried out by (partially) resolving the singularities of the natural singular map of the punctured complex projective plane which extends $g_n$.

As an application, we determine the classes represented by $g_n$ in the cobordism group of immersions which is naturally identified with the stable $3$-stem. It follows in particular that $g_n$ represents a generator of the stable $3$-stem if and only if $n$ is divisible by $3$.
\end{abstract}

\section{Introduction}\label{sec:intr}
Consider a self-transverse immersion $f\colon S^{2}\to\R^{4}$ of the $2$-sphere $S^2$ into $4$-space $\R^4$. Let $p\in\R^{4}$ be a self intersection point of $f$ and assume that $S^{2}$ and $\R^{4}$ are oriented. Ordering the two local sheets of $f(S^{2})$ intersecting at $p$ we get an intersection number at $p$. Since the codimension is even this intersection number is independent of the ordering. We say that the sum of the intersection numbers over all the double points of $f$ is its {\em algebraic number of double points}.

Let $f\colon S^{2}\to\R^{4}$ be an immersion with algebraic number of double points equal to $-n$. Then the normal bundle of $f$ is the oriented $2$-plane bundle $\xi_{2n}$ of Euler number $2n$, see \cite{whitney}, and the orientations of $S^{2}$ and $\R^{4}$ induce an orientation on $\xi_{2n}$. The unit sphere bundle of $\xi_{2n}$ is the lens space $L(2n,1)$ which we orient as the boundary of the unit disk bundle and it follows by micro-extension that $f$ induces an immersion $f_{2n}\colon L(2n,1)\to\R^{4}$. Consider the universal ($2n$-fold) covering $\pi_{2n}\colon S^{3}\to L(2n,1)$, where we orient $S^{3}$ so that $\pi_{2n}$ is orientation preserving. Precomposing we get an immersion
\begin{equation}\label{e:G_n}
g_n=f_{2n}\circ\pi_{2n}\colon S^{3}\to\R^{4}.
\end{equation}

Let $\Imm[S^m,\R^N]$ denote the group of regular homotopy classes of immersions $S^m\to\R^N$, where the group operation is induced by connected sum of immersions, see \cite{kervaire}. The Smale-Hirsch h-principle implies that the group $\Imm[S^m,\R^N]$ is isomorphic to the $m^{\rm th}$ homotopy group $\pi_m(V_{N,m})$ of the Stiefel manifold $V_{N,m}$ of $m$ frames in $\R^{N}$. The isomorphism is given by the Smale invariant
\[
\Omega\colon\Imm[S^m,\R^N]\to\pi_{m}(V_{N,m}),
\]
see \cite{smale}. In particular, in the case studied in this paper:
\[
\Imm[S^{3},\R^{4}]=\pi_3(V_{4,3})\approx\pi_3(\SO_4)\approx\Z\oplus\Z,
\]
see \eqref{eq:pi_3(SO_4)} for a description of the last isomorphism.

Let $\imm(m,N)$ denote the cobordism group of immersions of closed oriented $m$-manifolds into $\R^{N}$, where the group operation is induced by disjoint union. The cobordism group $\imm(3,4)$ is isomorphic to the stable $3$-stem
\[
\imm(3,4)\approx \pi^{S}_3\approx\Z_{24},
\]
see \cite{rohlin,wells}.

\begin{thm}\label{t:main}
The Smale invariant of the immersion $g_{n}\colon S^{3}\to\R^{4}$, $n>0$, satisfies
\[
\Omega(g_n)=\bigl(4n-1,(n-1)^{2}\bigr)\in\Z\oplus\Z.
\]
It follows in particular that $g_n$ represents the element
\[
(2n^{2}+1)\operatorname{mod}{24}\,\in\Z_{24}\approx\pi_{3}^{S},
\]
and hence generates the stable $3$-stem if and only if $n$ is a multiple of $3$.
\end{thm}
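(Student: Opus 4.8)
The strategy is to compute $\Omega(g_n)$ by exhibiting a singular Seifert surface for $g_n$ and reading off the two integer invariants from its topology. The natural candidate is the punctured complex projective plane: since $f\colon S^2\to\R^4$ with algebraic double points $-n$ bounds (after micro-extension) the disk bundle $\xi_{2n}$, and the $2n$-fold cover pulls back to a map whose source is related to $\C P^2$ with a disk removed, one obtains a map $G_n\colon \C P^2\setminus(\text{open disk})\to\R^4$ which restricts to $g_n$ on the boundary $S^3$. This map is an immersion away from a controlled singularity set. The plan is first to analyze this singular map carefully, identify the precise nature of its singularities (expected to be finitely many Whitney-umbrella-type or cross-cap points together with a sheet of double points), and then to partially resolve these singularities so that the obstruction-theoretic formula for the Smale invariant in terms of a bounding singular surface becomes applicable.

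Concretely, I would: (1) recall the formula expressing $\Omega$ for an immersion $S^3\to\R^4$ bounding a generic map of a $4$-manifold $W$ in terms of the Euler characteristic and signature of $W$, the number of singular (cross-cap) points counted with sign, and the normal data along the double-point locus — this is the Ekholm–Szűcs / Hughes–Melvin type formula identifying the two $\Z$-components (the ``$\ndeg$'' or normal-degree component and the ``$\hdef$'' or Hirsch-type component) with characteristic numbers of $W$; (2) apply it to $W=\C P^2\setminus D^4$, for which $\chi=2$, $\sigma=1$, and track how the construction via $\xi_{2n}$ and the $2n$-fold cover scales these contributions in $n$; (3) solve the resulting linear expressions to get $\Omega(g_n)=(4n-1,(n-1)^2)$. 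For the cobordism statement, I would use the known surjection $\pi_3(\SO_4)\to\pi_3^S\cong\Z_{24}$ and its explicit description on $\Z\oplus\Z$ (via the Hopf invariant / $J$-homomorphism and the Rokhlin–Wells identification), compose with the formula for $\Omega(g_n)$, and simplify modulo $24$ to obtain $(2n^2+1)\bmod 24$; checking when this is a unit in $\Z_{24}$ — equivalently when $2n^2+1$ is coprime to $24$, which happens exactly when $3\mid n$ — gives the final divisibility claim.

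The main obstacle I expect is step (1)–(2): correctly identifying the singular map $G_n$ and its singularity set, and in particular handling the resolution of singularities so that the Euler-characteristic/signature formula applies without spurious correction terms. The delicate points are (a) verifying that the source of the extended map really is the punctured $\C P^2$ (as opposed to some other $4$-manifold bounding $L(2n,1)$ or $S^3$) and that the orientations and the framing conventions match those in the Smale-invariant formula, and (b) keeping exact track, as a function of $n$, of the signed count of cross-cap points and the self-intersection contribution of the double-point surface, since these are precisely what produce the quadratic term $(n-1)^2$. Once the bookkeeping in $n$ is pinned down, the rest — solving a small linear system and a congruence computation modulo $24$ — is routine.
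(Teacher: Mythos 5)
Your overall strategy coincides with the paper's: extend $g_n$ over the punctured complex projective plane, express the Smale invariant through an Euler-characteristic/signature formula corrected by singularity contributions of the bounding map, and push forward to $\pi_3^S\approx\Z_{24}$ via the $J$-homomorphism; the mod-$24$ endgame (odd for all $n$, divisible by $3$ iff $3\nmid n$) is also correct. However, there is a genuine gap exactly where you flag the ``main obstacle,'' and what is missing is an idea, not bookkeeping. The natural extension $G_n=F_{2n}\circ\Pi_{2n}\colon\C P^{2}_{0}\to\R^{4}$ (fiberwise $2n$-fold branched cover followed by the micro-extension immersion) has differential of rank $2$ at \emph{every} point of the zero section $\C P^{1}\subset\C P^{2}_{0}$, so it is not generic and has no isolated cross-caps or Whitney umbrellas at all; your expectation of ``finitely many cross-cap points together with a sheet of double points'' misidentifies the singularity type, and the formula actually needed is $\hdef(f)=-3\sigma(V)-\sharp\Sigma^{2}(F)$, where $\sharp\Sigma^{2}(F)$ is a sum of Stingley indices of isolated rank-$2$ points (double-point data of the Seifert surface does not enter).

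The key construction your plan does not supply is the explicit perturbation that isolates these singularities: fiberwise one replaces $z\mapsto z^{k}$ by $z\mapsto(1-\epsilon)z^{k}+\epsilon w\bar z$ on one chart and by $(1-\epsilon)z^{k}+\epsilon w^{k}\bar z$ on the other (an unfolding of the complex germ regarded as a real germ, matched across the clutching map of $\xi_1$ and $\xi_k$). This collapses the circle's worth of rank-$2$ points over each fiber to exactly two isolated rank-$2$ points, at the south and north poles of the zero section, whose indices are computed as local degrees of an associated \emph{holomorphic} map to be $k-1$ and $k(k-1)$. With $k=2n$ this gives $\sharp\Sigma^{2}=(2n-1)+2n(2n-1)=4n^{2}-1$, hence $\hdef(g_n)=-3\sigma(\C P^{2}_{0})-(4n^{2}-1)=-4n^{2}-2$, which together with $\ndeg(g_n)=4n$ yields the second component $(n-1)^{2}$. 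Since these indices are large (not $\pm1$), no signed count of cross-cap points can produce the quadratic term; without the perturbation and the index computation the argument cannot be completed as outlined.
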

Theorem \ref{t:main} is proved in Section \ref{sect:main}, for other constructions of generators of the stable $3$-stem, see \cite{carter,hughes}. The proof uses singular Seifert surfaces, which were introduced in \cite{e-s0} and have since proved to be quite useful in the study of embeddings, immersions, and other maps. See for example \cite{andersson,e-s0,e-s1,juhasz,sst,takase}. In fact, one of the main technical points of this paper is a concrete construction of singular Seifert surfaces for the immersions $g_n$. The construction is based on perturbations derived from an unfolding of a certain complex map germ, regarded as a real map germ, and might be of interest from a more general viewpoint since similar constructions of stable or other singular maps with desired properties can be used in many other settings.

In Section \ref{sect:remarks} we give a brief discussion of relations to other results. We first discuss the immersion $g_1\co S^3\to\R^4$, studied
by Milnor \cite[(11), \S{IV}]{milnor} and since then
in several papers (e.g.~\cite{l-s,ekholm}).
In this case, Theorem \ref{t:main} recovers the result \cite[Proposition~8.4.1]{ekholm} which shows that $g_1$ represents a generator in $\Z_{8}\subset\Z_{24}$ as well as the fact that the triple point invariant $\lambda(g_1)\in\Z_3$ (defined in \cite[\S6.2]{ekholm}) of $g_1$ vanishes. See Remark \ref{rmk:linking} for an alternative proof of the latter. Second, we show that the immersion $g_2$ coincides, up to regular homotopy and choice of orientation on $S^{3}$, with Melikhov's example
of an immersion with non-trivial stable Hopf invariant \cite[Example~4]{melikhov}.

\subsection*{Notational conventions}
We work in the smooth category throughout;
all manifolds and immersions are assumed to be differentiable of class $C^\infty$.
Furthermore, all spheres and Euclidean spaces are assumed to be oriented and we orient the boundary of an oriented manifold by the ``outward normal first'' convention, as in e.g.~\cite{hughes,k-m}.

\section{Smale invariants in terms of singular Seifert surfaces}
In this section we first establish notation for the Smale invariant of immersions $S^{3}\to\R^{4}$ and review some of its properties. Then we present a formula for one of the components of the Smale invariant in terms of singularities of a Seifert surface.

\subsection{The regular homotopy class}\label{subsect:regular}
An immersion $f\co S^3\to\R^4$ comes equipped with a natural stable framing via the bundle isomorphism $\epsilon^1\oplus TS^3\cong f^*T\R^4$, where $\epsilon^1$ denotes the trivial line bundle. The homotopy class of the stable framing
is completely characterized by two integers: the degree $\ndeg(f)$ and the Hirzebruch defect $\hdef(f)$. We refer to \cite{k-m} for the detailed definitions and mention here only that $\ndeg(f)$ equals the normal degree of $f$, see \cite{milnor}.

The Smale-Hirsch h-principle implies that the map which associates to an immersion $S^{3}\to\R^{4}$ its natural stable framing induces a (weak) homotopy equivalence between the space of immersions and the space of stable framings. In particular, on $\pi_0$ (i.e.~on the level of path components) the Smale invariant, see Section \ref{sec:intr},
\[
\Omega(f)\in\pi_3(\SO_4)\approx\Z\oplus\Z,
\]
which determines the regular homotopy class of
an immersion $f\colon S^{3}\to\R^{4}$, is determined by $\ndeg(f)$ and $\hdef(f)$. In order to describe this in more detail we describe our conventions for $\Omega$ and $\pi_{3}(\SO_4)$.

We first describe an explicit isomorphism $\pi_3(\SO_4)\to\Z\oplus\Z$ following \cite{steenrod}: let $e_1,e_2,e_3,e_4$ denote the standard basis vectors in $\R^{4}$ and think of $\SO_{4}$ as the set of orthonormal transformations $R\colon\R^{4}\to\R^{4}$. Consider the fibration $p\colon\SO_4\to S^{3}$, $p(R)=R(e_1)$ with fiber $\SO_{3}$. Think of $S^{3}$ as the set of unit quaternions in $\HH\approx\R^{4}$ via the identification of the vectors $e_1,e_2,e_3,e_4\in\R^{4}$ with $1,i,j,k\in\HH$. For $x,y\in\R^{4}$, let $x\cdot y\in\R^{4}$ denote their quaternionic product. The map $\sigma\colon S^{3}\to\SO_4$, $\sigma(x)y=x\cdot y$ is a section of $p\colon\SO_4\to S^{3}$. Hence $\SO_4=S^{3}\times\SO_3$. Let $\rho\colon S^3\to\SO_4$ be the map given by $\rho(x)y=x\cdot y\cdot x^{-1}$. Then $p\circ\rho$ is the constant map to $e_1$ and $\rho$ is a double cover of the fiber of $p$ over $e_1$. It follows that the homotopy classes $[\sigma]$ and $[\rho]$ of $\sigma$ and $\rho$, respectively, generate $\pi_3(\SO_4)$ and we have
\begin{equation}\label{eq:pi_3(SO_4)}
\pi_3(SO_4)=\Z[\sigma]\oplus\Z[\rho]=\Z\oplus\Z.
\end{equation}

Below we will always use the isomorphism in \eqref{eq:pi_3(SO_4)} when we consider Smale invariants: we write simply $\Omega(f)=(a,b)\in\Z\oplus\Z$ if $\Omega(f)=a[\sigma]+b[\rho]\in\pi_3(\SO_4)$. Furthermore, we will use throughout the definition of the Smale invariant $\Omega$ as given in \cite[\S1]{hughes}.
 
We now turn to the more precise description of $\Omega(f)$ in terms of $D(f)$ and $H(f)$. According to \cite[Theorem~3.1]{hughes} (see also Remark \ref{rem:signsk-m} below), if $f\co S^3\to\R^4$ is an immersion which extends to an immersion $F\co V\to\R^4$ of a compact oriented $4$-manifold $V$ with boundary $\pa V=S^{3}$ then
\begin{equation}\label{eq:hughescob}
\Omega(f)=\left(\chi(V)-1,\frac{3\sigma(V)-2\chi(V)+2}{4}\right)\in\Z\oplus\Z,
\end{equation}
where $\chi(V)$ denotes the Euler characteristic of $V$ and $\sigma(V)$ its signature. Applying \cite[Theorems~2.2(b) and 2.5(b)]{k-m} to the stable framing induced by $f$, we find $\ndeg(f)=\chi(V)$ and $\hdef(f)=-3\sigma(V)$ and we conclude that for $f$ as above
\begin{equation}\label{eq:smale}
\Omega(f)=
\left(\ndeg(f)-1,\frac{-\hdef(f)-2(\ndeg(f)-1)}{4}\right)\in\Z\oplus\Z.
\end{equation}
On the other hand \cite[Theorems~2.2(a) and 2.5(a)]{k-m} imply that the formula \eqref{eq:smale} gives a homomorphism $\Imm[S^{3},\R^{4}]\to\Z\oplus\Z$, see Remark \ref{rem:signsk-m}. 

We claim that \eqref{eq:smale} holds for any immersion. To see this we will use two subgroups $E$ and $N$ of $\Imm[S^{3},\R^{4}]$. Let $\iota\colon \R^{4}\to\R^{5}$ denote the natural inclusion. The subgroup $E$ (respectively $N$) consists of the regular homotopy classes of immersions $f\colon S^{3}\to\R^{4}$ such that $\iota\circ f\colon S^{3}\to\R^{5}$ is regularly homotopic to an embedding (respectively to the standard embedding $\iota\circ s$, where $s\colon S^{3}\to\R^{4}$ is the standard embedding of the sphere). By \cite[Lemmas 3.3.3 and 7.1.1, Proposition 4.1.2]{ekholm}, see also \cite{hughes-melvin}, the subgroup $E\subset\Imm[S^{3},\R^{4}]$ has index 24 and the subgroup $N$ is generated by the regular homotopy class of the orientation reversed standard sphere $\hat s\colon S^{3}\to\R^{4}$.

We next show that \eqref{eq:smale} holds on $E$. Let $f\colon S^{3}\to\R^{4}$ be an immersion such that $\iota\circ f$ is regularly homotopic to an embedding  $\tilde g\colon S^{3}\to\R^{5}$. Then $\tilde g$ admits an embedded orientable Seifert surface $\tilde G\colon W\to\R^{5}$. Since $\tilde G$ admits a normal vector field, the Hirsch lemma, see \cite{hirsch}, implies that it can be pushed down to an immersion $G\colon W\to\R^{4}$ by regular homotopy. (The manifold to which the Hirsch lemma is applied has boundary and is hence a regular neighborhood of its $3$-skeleton. Therefore the Hirsch lemma applies even though the dimension of the source equals the dimension of the target.) Restricting $G$ to the boundary $\pa W=S^{3}$ we get an immersion $g\colon S^{3}\to\R^{4}$ such that, $\iota\circ f$ and $\iota\circ g$ are regularly homotopic. Thus, the regular homotopy classes of $f$ and $g$ differ by an element of $N$. As mentioned above, $N$ is generated by the class of $\hat s$. Therefore there exists an integer $k$ such that
\begin{equation}\label{eq:lifteq}
\Omega(f)=\Omega(g)+k\,\Omega(\hat s).
\end{equation}
(See \cite[Proposition 7.1.2]{ekholm} for the geometric counterpart of \eqref{eq:lifteq}.) Now, both $g$ and $\hat s$ extend to immersions of compact oriented $4$-manifolds and thus \eqref{eq:smale} holds for $g$ and $\hat s$. As mentioned above, \eqref{eq:smale} gives a homomorphism and consequently \eqref{eq:lifteq} implies that \eqref{eq:smale} holds for $f$. We conclude that \eqref{eq:smale} holds on the finite index subgroup $E$ and hence in general.    

\begin{rem}\label{rem:signsk-m}
In \cite[Theorems~2.2(a) and 2.5(a)]{k-m} it is described how the counterparts $d(\phi)$ of $D(f)$ and $h(\phi)$ of $H(f)$ change as the (stable) trivialization $\phi$ changes: acting by $\sigma$ on $\phi$ decreases $d$ by $1$ and increases $h$ by $2$, and acting by $\rho$ does not change $d$ and increases $h$ by $4$. Here we keep the trivialization fixed but vary the framing. Acting on the trivialization by $\sigma$ (or $\rho$) corresponds to acting on the framing by $\sigma^{-1}$ (or $\rho^{-1}$). Consequently, changing the Smale invariant by $\sigma$ increases $D$ by $1$ and decreases $H$ by $2$, and changing it by $\rho$ keeps $D$ fixed and decreases $H$ by $4$. This explains the sign differences between the formulas in \cite[Theorems~2.2(a) and 2.5(a)]{k-m} and the formulas for the components in the right hand side of \eqref{eq:smale}. Also, it shows that $H(f)+2D(f)-2$ is divisible by $4$ for any immersion $f$, compare \cite[Theorem 2.6]{k-m}.

The coefficient $\frac{3}{4}$ of $\sigma(V)$ in the second component in the right hand side of \eqref{eq:hughescob} differs from the corresponding coefficient in \cite[Theorem~3.1]{hughes} which is $-\frac{3}{4}$. The reason for this sign difference can be traced to the proof of the result given in \cite{hughes}: on p.\,178 of that paper it is written ``This is just $-\frac{1}{2}p_1(M)$.'', thereby relating the obstruction to extending a trivialization of the tangent bundle of an almost parallelizable $4$-manifold with $-\frac{1}{2}p_1(M)$. With our conventions this obstruction should be related to $+\frac{1}{2}p_1(M)$.     
\end{rem}

\subsection{The Smale invariant in $5$-space and cobordism class}\label{subsect:cobordism}
The long exact sequence of homotopy groups of the fibration
\[
\begin{CD}
SO_{2}\to SO_{5}\to V_{5,3},
\end{CD}
\]
where $V_{5,3}$ is the Stiefel manifold of orthogonal $3$-frames in $\R^{5}$, shows that $\pi_{3}(SO_{5})=\pi_{3}(V_{5,3})$. We can thus consider the Smale invariant of an immersion $h\colon S^{3}\to\R^{5}$ as an element
\[
\Omega(h)\in\pi_{3}(SO_{5})\approx\Z.
\]
In particular if $f\colon S^{3}\to\R^{4}$ is an immersion and if $\iota\colon\R^{4}\to\R^{5}$ denotes the inclusion then the fact that the map $\pi_3(\SO_4)\to\pi_3(\SO_5)$ induced by the inclusion is given by $(a,b)\mapsto a+2b$, see e.g.~\cite[p.\,178]{hughes}, implies that the Smale invariant of $\iota\circ f$ satisfies
\[
\Omega(\iota\circ f)=-\frac{1}{2}\hdef(f)\in\Imm[S^3,\R^5]\approx\Z.
\]

As mentioned in Section \ref{sec:intr}, the oriented cobordism group $\imm(3,4)$ of immersions of oriented $3$-manifolds into $\R^4$ is isomorphic to the stable homotopy group $\pi_3^S\approx\Z_{24}$ of spheres. The isomorphism can be constructed  via the Pontrjagin-Thom construction and in order to compute the cobordism class represented by an immersion $f\colon S^{3}\to\R^{4}$
we use the following factorization of the map $\Imm[S^3,\R^4]\to\imm(3,4)$:
\[
\begin{CD}
\pi_3(\SO_4) @>>> \pi_3(\SO_5)
@>>>
\cdots
@>>>
\pi_3(\SO)
@>{J}>>
\pi_3^S\approx\Z_{24},
\end{CD}
\]
where the last map is the $J$-homomorphism which is surjective (see \cite[p.\,180]{hughes}).
In fact, we get an isomorphism
\[
\imm(3,4)\to\Z_{24}
\]
as follows. If $f\colon M\to\R^{4}$ is an immersion of a closed $3$-manifold representing a cobordism class of immersions then we map it to 
\[
-\frac{\hdef(f)+3\mu(M,s_f)}{2}\pmod{24}, 
\]
where $\mu(M,s_f)\in\Z_{16}$ is the $\mu$-invariant of $M$ with respect to the spin structure
$s_f$ induced by $f$, see \cite{takase}. Note that $\mu(S^3,s_f)$ is always equal to zero in $\Z_{16}$.

\subsection{Singular Seifert surfaces and Stingley's index}\label{subsect:stingley}
In Section \ref{subsect:regular}, an expression for the Hirzebruch defect of an immersion $S^{3}\to\R^{4}$ which does extend to an immersion of a compact oriented $4$-manifold was obtained, see \eqref{eq:hughescob} and \eqref{eq:smale}. In this section we generalize that result to immersions $f\co S^3\to\R^4$ which may not extend to immersions
of compact oriented $4$-manifolds. For such immersions, the Hirzebruch defect can be read off from extensions by singular maps which are regular near the boundary.  An extension by a smooth map with singularities of a given map (embedding, immersion, generic map, etc.) is often called a singular Seifert surface because of similarities with Seifert surfaces in knot theory. For immersions, several variations of the original definition have been used since the notion was introduced in \cite{e-s0}.

In \cite{takase}, generic maps $F\co V\to\R^4$ of compact oriented $4$-manifolds were studied and the following result was derived: If $\partial V= S^{3}$ and the restriction $f=F|_{\partial V}\colon S^{3}\to\R^{4}$ is an immersion, and if $F$ is non-singular in a neighborhood of $\pa V$, then the Hirzebruch defect $\hdef(f)$ satisfies
\begin{equation}\label{eq:hdef1}
\hdef(f)=-3\sigma(V)-\sharp\Sigma^{2,0}(F),
\end{equation}
where $\sharp\Sigma^{2,0}(F)$ is the (algebraic) number of
umbilic points of $F$.
This was essentially proved in \cite[Proof of Lemma~3.2]{takase}. Here it should be mentioned that the sign convention in \cite{takase} differs from that used here; see Remark~\ref{rem:errata} below.
The proof of \eqref{eq:hdef1} is based on the Thom polynomial for umbilic singularities of a generic map
$G\co M\to\R^4$ of a closed oriented $4$-manifold $M$:
\begin{equation}\label{eq:thom}
\sharp\Sigma^{2,0}(G)=-\bigl\langle p_1(M),[M]\bigr\rangle=-3\sigma(M),
\end{equation}
where $p_1$ denotes the first Pontrjagin class and $[M]\in H_{4}(M;\Z)$ is the fundamental class, see \cite{macpherson,ronga}, \cite[\S2.1 in Chapter~4]{avgl}, and Remark \ref{rem:errata} below.

For the purposes of this paper, we weaken the condition for singular Seifert surfaces and use the following characterization.

\begin{defn}\label{defn:sss}
Let $f\co S^3\to\R^4$ be an immersion.
A \textit{singular Seifert surface} for $f$ is a smooth map
$F\co V\to\R^4$
from a compact oriented $4$-manifold $V$ with $\partial V=S^{3}$ which satisfies the following conditions. The restriction $F|_{\partial V}$ to the boundary equals $f$, there is a neighborhood of $\partial V$ where the map $F$ has no singularity, for any $p\in V$, the rank $\rank(dF_{p})$ of the differential
\[
dF_{p}\colon T_{p}V\to T_{F(p)}\R^{4}
\]
satisfies $\rank(dF_p)\ge 2$, and points $q$ with $\rank(dF_q)=2$ are isolated.
\end{defn}

Note that if $F\colon V\to\R^{4}$, $\partial V= S^{3}$ is a generic (i.e., locally stable) map which is non-singular near the boundary then its possible singularities are Morin singularities where the differential has rank $3$ and isolated umbilic points where the differential has rank $2$. Therefore a singular Seifert surface in the sense of \cite[\S2.1]{takase} fulfills the requirements of Definition \ref{defn:sss}.

Smooth maps between oriented $4$-manifolds with differential of rank $2$ at isolated points and rank $>2$ elsewhere
were studied by Stingley \cite{stingley}.
He associated an index \cite[\S2]{stingley} to isolated points where the rank equals $2$ as follows. (See
Remark~\ref{rem:errata} below for details on sign conventions.) Let $g\colon M\to N$ be a smooth map between oriented $4$-manifolds. Let $p\in M$ be such that $\rank(dg_p)=2$ and assume that $p$ is an isolated rank $2$ point in the sense that there exists a neighborhood $U\subset M$ of $p$ such that $\rank(dg_{q})>2$ for all $q\in U-\{p\}$. Then there are local coordinates $x=(x_1,x_2,x_3,x_4)\in\R^{4}$ around $p$ and $y=(y_1,y_2,y_3,y_4)\in\R^{4}$ around $g(p)$ such that, in these coordinates the map $g(x)=\bigl(g_1(x),g_2(x),g_3(x),g_4(x)\bigr)$ is given by
\[
\begin{cases}
g_1(x)&=x_1,\\
g_2(x)&=x_2,\\
g_3(x)&=A(x),\\
g_4(x)&=B(x),
\end{cases}
\]
where $A(0)=B(0)=0$ and where
\[
\left(
\begin{matrix}
\frac{\pa A}{\pa x_3} & \frac{\pa A}{\pa x_4}
\vspace{.2cm}\\
\frac{\pa B}{\pa x_3} & \frac{\pa B}{\pa x_4}
\end{matrix}
\right)
\]
vanishes for $x=0$ but not for $x\ne 0$.

\begin{defn}\label{defn:index}
The \textit{index} $\ind_p(g)$ of an isolated rank $2$ point $p$ as above is
\[
\ind_p(g)=\deg_{\epsilon}(\hat{g}).
\]
Here $\hat g(x)=\bigl(\hat g_1(x),\hat g_2(x),\hat g_3(x),\hat g_4(x)\bigr)$ is defined as
\[
\begin{cases}
\hat g_1(x)&=\frac{\pa A}{\pa x_3},\vspace{.1cm}\\
\hat g_2(x)&=\frac{\pa B}{\pa x_3},\vspace{.1cm}\\
\hat g_3(x)&=\frac{\pa A}{\pa x_4},\vspace{.1cm}\\
\hat g_4(x)&=\frac{\pa B}{\pa x_4},
\end{cases}
\]
in a neighborhood of $0$ and $\deg_{\epsilon}(\hat{g})$ is the degree of $\hat g$ around $0$. I.e.~$\deg_{\epsilon}(\hat{g})$ denotes the signed sum of preimages at any sufficiently small regular value $\epsilon$ of $\hat g$. 
\end{defn}

It is straightforward to check that $\ind_{p}(g)$ is independent of the choice of local coordinates. Furthermore, the index of an umbilic point of a map between $4$-manifolds equals $\pm1$ and in this case we use the index to define the sign of umbilic points.

\begin{rem}\label{rem:ori}
If the orientation of the source manifold in Definition~\ref{defn:index} is changed then
the index of each rank $2$ point changes its sign. On the other hand, if the orientation of the target manifold is changed then the index of any rank $2$ point remains unchanged.
\end{rem}

For a smooth map $g\colon M\to N$ between oriented $4$-manifolds where $M$ is closed, let
\[
\Sigma^{2}(g)=\{p\in M\ST \rank(dg_p)=2\}
\]
and note that if the rank $2$ points of $g$ are isolated then $\Sigma^2(g)$ is finite. For such maps $g$ with isolated rank $2$ points we define \textit{the algebraic number of rank $2$ points} $\sharp\Sigma^{2}(g)$ of $g$ as
\[
\sharp\Sigma^2(g)=\sum_{p\in\Sigma^2(g)}\ind_p(g).
\]

Using the above notation, there is the following Riemann-Hurwitz type formula for closed oriented $4$-manifolds.

\begin{thm}
[(Stingley {\cite[Theorem~I]{stingley}}, see also Porter \cite{porter} and Harvey and Lawson {\cite[\S8]{h-l}})]\label{t:RieHur}
Let $g\colon M\to N$ be a smooth map between oriented $4$-manifolds
with isolated rank $2$ points and assume that $M$ is closed. Then
\begin{equation}\label{eq:sigma2long}
\bigl\langle(g^*p_1(N)-p_1(M)),[M]\bigr\rangle=\sharp\Sigma^2(g),
\end{equation}
where $p_1$ denotes the first Pontrjagin class and $[M]$ is the fundamental class of $M$.
\end{thm}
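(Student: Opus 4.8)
The plan is to recognise $\sharp\Sigma^{2}(g)$ as the value on $[M]$ of the primary obstruction to deforming the bundle homomorphism $dg\co TM\to g^{*}TN$ into one of rank $\ge 3$ everywhere, and then to identify that obstruction class universally with $g^{*}p_{1}(N)-p_{1}(M)$, anchoring the normalisation to the known Thom polynomial \eqref{eq:thom}. Concretely, regard $dg$ as a section of the vector bundle $\mathcal{H}=\mathrm{Hom}(TM,g^{*}TN)$, with contractible fibre $\mathrm{Mat}_{4}(\R)$, and let $\mathcal{Z}\subset\mathcal{H}$ be the fibrewise locus where the rank is $\le 2$. In $\mathrm{Mat}_{4}(\R)$ the matrices of rank $\le 2$ form a real algebraic subvariety of codimension $(4-2)^{2}=4$ whose singular stratum (rank $\le 1$) has codimension $9$; the normal bundle along the rank $2$ stratum is $\mathrm{Hom}(\ker,\mathrm{coker})$, whose top exterior power is $\mathrm{GL}$-equivariantly a perfect square, so $\mathcal{Z}$ is $\mathrm{GL}_{4}\times\mathrm{GL}_{4}$-equivariantly co-oriented and carries a Thom class $\tau_{\mathcal{Z}}\in H^{4}(\mathcal{H},\mathcal{H}\setminus\mathcal{Z};\Z)$. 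Its fibre complement $\mathcal{F}=\mathrm{Mat}_{4}(\R)\setminus\{\rank\le 2\}$ is $2$-connected with $\pi_{3}(\mathcal{F})\cong\Z$ generated by the linking class of $\mathcal{Z}$ (general position plus Alexander duality). After a $C^{0}$-small perturbation making $dg$ transverse to the rank strata near $\Sigma^{2}(g)$ without creating new rank $2$ points, the local contribution of each such point is $\deg_{\epsilon}(\hat g)$ as in Definition~\ref{defn:index}, so $\langle dg^{*}\tau_{\mathcal{Z}},[M]\rangle=\varepsilon\,\sharp\Sigma^{2}(g)$ for a universal sign $\varepsilon=\pm1$.

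Because $\mathcal{H}$ is a vector bundle, the projection $\pi\co\mathcal{H}\to M$ is a homotopy equivalence and $dg$ is a section of it, so the image of $dg^{*}\tau_{\mathcal{Z}}$ in $H^{4}(M;\Z)$ equals $(\pi^{*})^{-1}$ of the image of $\tau_{\mathcal{Z}}$ in $H^{4}(\mathcal{H};\Z)$; it is therefore independent of the section $dg$ and is a natural function of the isomorphism classes of the bundles $TM$ and $g^{*}TN$ alone. Hence it is pulled back from a universal class $\bar\tau\in H^{4}(B\mathrm{GL}_{4}\times B\mathrm{GL}_{4};\Z)$. Rationally, $H^{4}(B\mathrm{GL}_{4}\times B\mathrm{GL}_{4};\mathbb{Q})$ is spanned by $p_{1}(\gamma_{1})$ and $p_{1}(\gamma_{2})$, so $\bar\tau=a\,p_{1}(\gamma_{1})+b\,p_{1}(\gamma_{2})$ over $\mathbb{Q}$ for some constants $a,b$; since $H^{4}$ of a closed oriented $4$-manifold is torsion free, any integral torsion in $\bar\tau$ contributes nothing on $[M]$, and we obtain $\langle dg^{*}\tau_{\mathcal{Z}},[M]\rangle=a\langle p_{1}(M),[M]\rangle+b\langle g^{*}p_{1}(N),[M]\rangle$.

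To pin down $a$ and $b$, evaluate $\bar\tau$ on the diagonal pair $(\gamma,\gamma)$: this concerns $\mathrm{End}(\gamma)$, which has the identity section missing $\mathcal{Z}$, and since any two sections of a bundle with contractible fibre are homotopic the class vanishes; as $p_{1}\ne 0$ in $H^{4}(B\mathrm{GL}_{4};\mathbb{Q})$ this forces $b=-a$. The remaining normalisation comes from the case $N=\R^{4}$, where $g^{*}p_{1}(\R^{4})=0$ and \eqref{eq:thom} gives $\sharp\Sigma^{2}(g)=-\langle p_{1}(M),[M]\rangle$; combined with $\langle dg^{*}\tau_{\mathcal{Z}},[M]\rangle=\varepsilon\,\sharp\Sigma^{2}(g)=a\langle p_{1}(M),[M]\rangle$ this gives $a=-\varepsilon$ and $b=\varepsilon$, so the universal sign cancels and $\varepsilon\,\sharp\Sigma^{2}(g)=\varepsilon\langle g^{*}p_{1}(N)-p_{1}(M),[M]\rangle$, i.e.\ $\sharp\Sigma^{2}(g)=\langle g^{*}p_{1}(N)-p_{1}(M),[M]\rangle$ in general, which is \eqref{eq:sigma2long}.

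The step I expect to require the most care is the local one underlying $\langle dg^{*}\tau_{\mathcal{Z}},[M]\rangle=\varepsilon\,\sharp\Sigma^{2}(g)$: verifying that at an isolated rank $2$ point the index $\deg_{\epsilon}(\hat g)$ of Definition~\ref{defn:index} really is the linking number of $dg$ with the co-oriented rank $\le 2$ stratum, so that the two sides count the same thing, and doing this compatibly with the orientation conventions discussed in Remark~\ref{rem:errata}. An alternative, more analytic route that yields the formula directly is the singular-connection (Chern--Weil-with-currents) method of Harvey and Lawson \cite[\S8]{h-l}: after choosing connections on $TM$ and $g^{*}TN$, the difference of the associated first Pontrjagin forms is exact away from $\Sigma^{2}(g)$ through a transgression form built from $dg$, and the residue of that form at an isolated rank $2$ point equals the point's index.
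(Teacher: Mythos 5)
The paper does not actually prove Theorem~\ref{t:RieHur}: it quotes the formula from Stingley's thesis (with Porter and Harvey--Lawson as alternative sources), and the only argument it supplies is the sign check in Remark~\ref{rem:errata}, where \eqref{eq:sigma2long} is taken as known up to sign and the sign is pinned down by computing the indices of the seven rank~$2$ points of Porter's explicit map $\C P^{2}\to\R^{4}$. Your proposal is therefore doing more than the paper: it is a genuine proof sketch in the classical Thom-polynomial style of the cited sources --- view $dg$ as a section of $\mathrm{Hom}(TM,g^{*}TN)$, pull back the Thom class of the co-oriented rank-$\le 2$ stratum, recognise the resulting class as a universal characteristic class of the pair of bundles, and fix the two rational coefficients by the diagonal trick ($b=-a$) plus one normalising example. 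The structure is sound, and your observation that $\Lambda^{4}\mathrm{Hom}(\ker,\mathrm{coker})$ is a tensor square --- so the class descends to $B\mathrm{O}_{4}\times B\mathrm{O}_{4}$ rather than $B\mathrm{SO}_{4}\times B\mathrm{SO}_{4}$, ruling out Euler-class terms and consistent with Remark~\ref{rem:ori} --- is exactly what makes the two-coefficient ansatz legitimate. Two caveats. The step you flag really is the crux and should be written out: it does work, because at an isolated rank~$2$ point $dg_{p}$ lies in the smooth stratum of $\mathcal{Z}$ (the rank-$\le 1$ locus has codimension $9$), and in the normal form of Definition~\ref{defn:index} the normal slice to the stratum is parametrised by the block $\bigl(\tfrac{\pa A}{\pa x_3},\tfrac{\pa A}{\pa x_4},\tfrac{\pa B}{\pa x_3},\tfrac{\pa B}{\pa x_4}\bigr)$, which is $\hat g$ up to a permutation of target coordinates, so the local contribution is $\pm\deg_{\epsilon}(\hat g)$ with a universal sign. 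Second, anchoring the normalisation to \eqref{eq:thom} is slightly delicate: as the paper's Remark~\ref{rem:errata} stresses, the signs attached to that Thom polynomial in the literature conflict, so a self-contained argument still needs one explicit index computation (the paper's $\C P^{2}$ example, or your $\varepsilon$ made explicit) to fix the sign of \eqref{eq:thom} itself in the conventions of Definition~\ref{defn:index}. With such an anchor in place your argument is correct, and it nicely complements the paper, which plays the role of normalising example without supplying the universality argument.
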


In particular, Theorem \ref{t:RieHur} implies that if
$g\co M\to\R^4$ is a map from a closed oriented $4$-manifold $M$ with isolated rank $2$ points, then
\begin{equation}\label{eq:stingley}
\sharp\Sigma^2(g)=-\bigl\langle p_1(M),[M]\bigr\rangle=-3\sigma(M),
\end{equation}
where $\sigma(M)$ is the signature of $M$.

\begin{rem}\label{rem:errata}
The Thom polynomial \eqref{eq:thom} and Stingley's formula \eqref{eq:stingley} imply that for an appropriate choice of signs, the count of indices of rank $2$ points of maps of a closed $4$-manifold $M$ into $\R^{4}$ gives the first Pontrjagin number of $M$. However, the sign conventions for the index which appear in the literature are often conflicting (see \cite{macpherson,porter,ronga,s-s,stingley,takase} and also \cite[\S2.2 in Chapter~4]{avgl}, for example). Even within Stingley's paper \cite{stingley}, signs appear to be in conflict: compare the signs in \cite[Lemma~2.1]{stingley} to those in the first line of the proof of \cite[Lemma~4.1]{stingley}.

In order to check which sign in \eqref{eq:sigma2long} is compatible with our conventions for $\sharp\Sigma^{2}$ in Definition \ref{defn:index} it is sufficient to consider one example where the count is non-zero. To this end we use the following, see \cite[p.\,399]{porter}. Let $(z_1,z_2,z_3)$ be coordinates on $\C^{3}$, let $S^{5}\subset \C^{3}$ denote the unit sphere, and consider $\C P^{2}=S^{5}/U_1$, where $e^{i\theta}\in U_1$ acts by scalar multiplication. We give $\C P^{2}$ the complex orientation so that its signature satisfies $\sigma(\C P^{2})=+1$.

Write $[z_1,z_2,z_3]$ for the point in $\C P^{2}$ represented by $(z_1,z_2,z_3)\in S^{5}$. Let $f\colon \C P^{2}\to\C\times\R^{2}=\R^{4}$ be given by 
\[
f([z_1,z_2,z_3])=\bigl(z_1\bar z_2,\Re(z_1\bar z_3),\Re(z_2\bar z_3)\bigr).
\]   
Then $f$ has isolated $\Sigma^{2}$-singularities at the following seven points in $\C P^{2}$:
\[
p_0=[0,0,1],\,\,p_{\pm 1}=\left[0,\frac{1}{\sqrt{2}},\pm\frac{1}{\sqrt{2}}\right]\!\!,\,\,
p_{\pm 2}=\left[\frac{1}{\sqrt{2}},0,\pm\frac{1}{\sqrt{2}}\right]\!\!,\,\, 
p_{\pm 3}=\left[\frac{1}{\sqrt{2}},\pm\frac{1}{\sqrt{2}},0\right]\!\!.
\]

Recall that $\ind_p(f)$ denotes the contribution from the singular point $p\in\Sigma^{2}(f)$ to $\sharp\Sigma^2(f)$. The orientation preserving diffeomorphisms of $\C P^{2}$: $l[z_1,z_2,z_3]=[z_2,z_1,z_3]$ and $k_{\pm}[z_1,z_2,z_3]=[\mp z_1,\pm z_2,z_3]$, and the diffeomorphisms of $\R^{4}$: $i(\eta_1,\eta_2,\eta_3,\eta_4)=(\eta_1,-\eta_2,\eta_4,\eta_3)$ and $j_{\pm}(\eta_1,\eta_2,\eta_3,\eta_4)=(-\eta_1,-\eta_2,\mp \eta_3,\pm \eta_4)$ satisfy $i\circ f\circ l=f$ and $j_\pm\circ f\circ k_{\pm}=f$. Hence
\begin{equation}\label{eq:relationsbtwnind}
\ind_{p_{+1}}(f)=\ind_{p_{-1}}(f)=\ind_{p_{-2}}(f)=\ind_{p_{+2}}(f)\quad\text{ and }\quad
\ind_{p_{-3}}(f)=\ind_{p_{+3}}(f),
\end{equation}
see Remark \ref{rem:ori}.  

We next compute $\ind_{p_0}(f)$. Note that $(u_1,v_1,u_2,v_2)$ where $z_j=u_j+iv_j$, $j=1,2$ correspond to complex coordinates near $[0,0,1]\in\C P^{2}$. Taking coordinates $(x_1,x_2,x_3,x_4)=(u_1,u_2,v_2,v_1)$ on $\C P^{2}$ we find, with notation as in Definition \ref{defn:index}, that 
\[
A(x)=u_1u_2+v_1v_2=x_1x_2+x_3x_4 \text{ and } B(x)=-u_1v_2+u_2v_1=-x_1x_3+x_2x_4.
\]
Thus,  
\[
\hat g(x)=\left(\frac{\pa A}{\pa x_3},\frac{\pa B}{\pa x_3},\frac{\pa A}{\pa x_4},\frac{\pa B}{\pa x_4}\right)=
(x_4,-x_1,x_3,x_2)
\] 
and $\ind_{p_0}(f)$, which is defined as the local degree of $\hat g$ at $0$, equals $-1$. Using similar quadratic local coordinate expressions near $p_k$, $k\in\{\pm1,\pm2,\pm3\}$, one finds that 
\begin{equation}\label{eq:locdeg=1}
\ind_{p_k}(f)=\pm 1,\text{ for } k\in\{\pm1,\pm2,\pm3\}. 
\end{equation}
By \eqref{eq:sigma2long}, taken up to sign, $\sharp\Sigma^{2}(f)=\pm\sigma(\C P^{2})=\pm 3$. Using \eqref{eq:relationsbtwnind} and \eqref{eq:locdeg=1}, we find that $\ind_{p_{-1}}(f)=-\ind_{p_{-3}}(f)$ and that $\sharp\Sigma^{2}(f)=3\ind_{p_0}(f)$.
Consequently, $\sharp\Sigma^{2}(f)=-3=-3\sigma(\C P^{2})$ and we conclude that the signs in \eqref{eq:sigma2long} and \eqref{eq:stingley} are correct.  
\end{rem}

\subsection{A formula for the Hirzebruch defect}\label{subsect:hirzebruch}
Let $f\colon S^{3}\to\R^{4}$ be an immersion and let $F\colon V\to \R^{4}$ be a singular Seifert surface of $f$, see Definition~\ref{defn:sss}. The following result is a slight generalization of \eqref{eq:hdef1}.
\begin{prop}\label{prop:hirzebruch}
The Hirzebruch defect $\hdef(f)$ (of the stable framing induced by $f$) satisfies
\begin{equation}\label{eq:hirzebruch}
\hdef(f)=-3\sigma(V)-\sharp\Sigma^2(F).
\end{equation}
In particular, the right hand side of \eqref{eq:hirzebruch} does not depend on the choice of singular Seifert surface $F\colon V\to\R^{4}$ extending $f$.
\end{prop}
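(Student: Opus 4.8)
The plan is to compare an arbitrary singular Seifert surface for $f$ with a \emph{generic} one for which the formula is already available from \eqref{eq:hdef1}, and then deduce the general case by gluing and Stingley's formula \eqref{eq:stingley}.

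\textit{A generic reference surface.} Since $\R^4$ is contractible, $f$ is null-homotopic and hence extends to a smooth map $F_0\colon D^4\to\R^4$ with $F_0|_{\partial D^4}=f$. A small perturbation supported away from a collar of $\partial D^4$, where $F_0=f$ is already an immersion, makes $F_0$ locally stable. A locally stable map between $4$-manifolds has only corank $1$ (Morin) points along a $3$-dimensional stratum and isolated corank $2$ points of rank $2$; thus $F_0$ is a singular Seifert surface in the sense of \cite[\S2.1]{takase}, hence of Definition \ref{defn:sss}, and at its rank $2$ points the index of Definition \ref{defn:index} computes the algebraic number of umbilic points, so $\sharp\Sigma^2(F_0)=\sharp\Sigma^{2,0}(F_0)$. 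Therefore \eqref{eq:hdef1} gives
\[
\hdef(f)=-3\sigma(D^4)-\sharp\Sigma^2(F_0)=-\sharp\Sigma^2(F_0).
\]

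\textit{Gluing and conclusion.} Let $F\colon V\to\R^4$ be an arbitrary singular Seifert surface of $f$. Both $F$ and $F_0$ equal $f$ on the boundary and are non-singular near it, so one can glue them along $S^3$ to a smooth map $G\colon M\to\R^4$, where $M=V\cup_{S^3}(-D^4)$ is a closed oriented $4$-manifold and $-D^4$ denotes $D^4$ with reversed orientation (so that the two boundary orientations are opposite and $M$ is oriented). Since the seam lies in the non-singular region of both $F$ and $F_0$, the map $G$ inherits the rank conditions and has isolated rank $2$ points, namely those of $F$ together with those of $F_0$. By Remark \ref{rem:ori}, reversing the orientation of $D^4$ reverses the sign of the index of each rank $2$ point of $F_0$, so $\sharp\Sigma^2(G)=\sharp\Sigma^2(F)-\sharp\Sigma^2(F_0)$. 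By Novikov additivity of the signature, $\sigma(M)=\sigma(V)-\sigma(D^4)=\sigma(V)$, and Stingley's formula \eqref{eq:stingley} applied to $G$ gives
\[
\sharp\Sigma^2(F)-\sharp\Sigma^2(F_0)=\sharp\Sigma^2(G)=-3\sigma(M)=-3\sigma(V).
\]
Combining this with the previous display yields $\hdef(f)=-\sharp\Sigma^2(F_0)=-3\sigma(V)-\sharp\Sigma^2(F)$, which is \eqref{eq:hirzebruch}. Since the left-hand side depends only on $f$, the right-hand side is independent of the choice of singular Seifert surface.

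The step I expect to be the main obstacle is the gluing: one must produce an honestly \emph{smooth} map $G$ on the closed manifold $M$ without introducing new points of rank $<2$ or non-isolated rank $2$ points along the seam. This is handled by the standard collar argument used in \cite{e-s0,takase,hughes}: first homotope $F$ through singular Seifert surfaces, rel a slightly smaller collar, so that on a collar $\partial V\times[0,1)$ it agrees with a fixed model immersion determined only by $f$ (for instance $f$ pushed off itself along a fixed normal line field), do the same for $F_0$, and then the identification of the two collars is a diffeomorphism intertwining the two maps. As the collar lies entirely in the regular locus of both maps, the rank hypotheses of Definition \ref{defn:sss} pass to $G$ automatically, and the count $\sharp\Sigma^2(G)$ localizes away from the seam.
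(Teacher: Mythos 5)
Your argument is correct, and its central step coincides with the paper's: glue two singular Seifert surfaces along their common boundary to a map of a closed oriented $4$-manifold, apply Stingley's formula \eqref{eq:stingley} together with Novikov additivity, and conclude that $-3\sigma(V)-\sharp\Sigma^2(F)$ is independent of the choice of $F$. Where you genuinely diverge is in how that invariant gets identified with $\hdef(f)$. You anchor the computation by producing a locally stable extension $F_0\colon D^4\to\R^4$ (possible since stable maps are dense in the nice dimensions $(4,4)$, and such maps have only Morin points and isolated rank~$2$ points) and then invoking \eqref{eq:hdef1}. The paper instead avoids \eqref{eq:hdef1} entirely: it glues in the trace of a regular homotopy so that the quantity descends to a homomorphism $a\colon\Imm[S^3,\R^4]\to\Z$, and then identifies $a$ with $\hdef$ by evaluating on generators of the index-$24$ subgroup $E$ --- the boundary of an immersed punctured K3 surface and the reversed standard sphere --- using \eqref{eq:hughescob} and the Kirby--Melvin formulas, with two homomorphisms to $\Z$ agreeing on a finite-index subgroup necessarily agreeing everywhere. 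Your route is shorter and needs no regular-homotopy trace, but it inherits the full weight of \eqref{eq:hdef1}, which the paper only cites as ``essentially proved'' in \cite{takase} and with a sign convention that Remark~\ref{rem:errata} flags as differing from the one used here; the paper's detour through $E$ appears designed precisely to make the identification of $a$ with $\hdef$ independent of that reference, resting only on the sign verification of \eqref{eq:stingley} carried out in Remark~\ref{rem:errata}. If you are willing to take \eqref{eq:hdef1} as stated (in the present paper's conventions, under which $\sharp\Sigma^{2,0}=\sharp\Sigma^2$ for generic maps by Definition~\ref{defn:index}), your proof is complete.

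Two small points of hygiene. First, ``$F_0=f$ is already an immersion'' on the boundary is not quite what you need: Definition~\ref{defn:sss} requires $F_0$ to be nonsingular (rank $4$) on a neighborhood of $\partial D^4$, which does not follow from $F_0|_{\partial D^4}=f$ alone; you must build the collar extension as $f$ pushed inward along its normal line field, as you in fact indicate in your final paragraph, and only then perturb rel that collar. Second, your orientation bookkeeping ($\sharp\Sigma^2(G)=\sharp\Sigma^2(F)-\sharp\Sigma^2(F_0)$ via Remark~\ref{rem:ori}, and $\sigma(M)=\sigma(V)$) is consistent and yields exactly \eqref{eq:hirzebruch}.
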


\begin{proof}
Let $F'\co V'\to\R^4$ be a singular
Seifert surface for an immersion $f'\co S^3\to\R^4$ which is regularly homotopic to $f$.
Using $F'$, $F$, and
the trace of a regular homotopy $h\co S^3\times[0,1]\to\R^4\times[0,1]$ between $f$ and $f'$,
we construct a map
\[
\hat G\colon V\cup_\partial S^3\times[0,1]\cup_\partial(-V')\to\R^4\times[0,1]
\]
from the closed oriented $4$-manifold $W=V\cup_\partial S^3\times[0,1]\cup_\partial(-V')$,
where $-V'$ denotes the manifold $V'$ with reversed orientation, which is smooth after smoothing corners. Since the maps $F$ and $F'$ are immersions near the boundary and since the trace of a regular homotopy is an immersion, it follows that $\hat G$ is an immersion in a neighborhood of $S^3\times[0,1]$. Since the composition of the projection  $\R^{4}\times[0,1]\to \R^{4}$ with an immersion does not have any rank $2$ points, we find that the map $G\colon W\to\R^{4}$ which equals $\hat G$ composed with the projection to $\R^4$ has isolated rank 2 points which are in natural 1-1 correspondence with those of $F$ and $F'$. In particular, the algebraic number of rank $2$ points $\sharp\Sigma^2(G)$ satisfies
\[
\sharp\Sigma^2(G)=\sharp\Sigma^2(F)-\sharp\Sigma^2(F').
\]

By \eqref{eq:stingley}, we have $\sharp\Sigma^2(G)=-3\sigma(W)$ and hence
\[
-3\sigma(V)-\sharp\Sigma^2(F)=-3\sigma(V')-\sharp\Sigma^2(F')
\]
by Novikov additivity. It follows that
\[
-3\sigma(V)-\sharp\Sigma^2(F)
\]
depends only on the regular homotopy class of $f$. We can thus define a homomorphism $a\co\Imm[S^3,\R^4]\to\Z$ as follows. For $[f]\in\Imm[S^3,\R^4]$, pick a representative immersion $f$ of $[f]$ and a singular Seifert surface $F\colon V\to \R^{4}$ of $f$ and define
\[
a([f])=-3\sigma(V)-\sharp\Sigma^2(F).
\]

Recall the index 24 subgroup $E\subset\Imm[S^3,\R^4]$ of regular homotopy classes of immersions which when composed with the inclusion into $\R^5$ admit a regular homotopy to an embedding, see Section \ref{subsect:regular}. By \cite[Lemma 3.3.3, Proposition 4.1.2]{ekholm} we can write this subgroup as
\[
E=\bigl\{[f]\in\Imm[S^{3},\R^{4}]\ST \Omega(f)=(m,n)\in\Z\oplus\Z,\,\, m+2n\in 24\Z\bigr\},
\]
where $f$ denotes a representative for the class $[f]$ and where $\Omega$ is the Smale invariant. 

To finish the proof we show that the homomorphism $a$ agrees with the Hirzebruch defect. Consider a smooth degree $4$ hypersurface in the complex projective space $\C P^3$. Such a surface is a K3-surface which is a closed simply connected $4$-manifold $X$ with second Betti number $22$ and signature $\pm 16$ depending on orientation. If $X_0$ denotes the complement of an open disk in $X$ then $X_0$ is parallelizable and hence it immerses into $\R^{4}$. Let $F\colon X_0\to\R^4$ be an immersion and let $f=F|_{\pa X_0}$ be the induced immersion of $S^3$. As above, by \cite{hughes} the Smale invariant of $f$ (with $X_0$ oriented so that its signature equals $16$) equals     
\[
\left(\chi(X_0)-1,\frac{3\sigma(X_0)-2\chi(X_0)+2}{4}\right)=
\left(22,\frac{3\cdot 16-2\cdot 23+2}{4}\right)
=(22,1).
\]
This in combination with \cite[Lemma 7.1.1]{ekholm}, imply that $[f]$ and the regular homotopy class $[\hat s]$ of the orientation reversed standard embedding $\hat s\colon S^{3}\to\R^{4}$, together generate $E$, see Section \ref{subsect:regular}. (Recall that addition corresponds to connected sum of immersions, see \cite[Section 2]{kervaire} for the definition and for additivity of the Smale invariant.) It follows from \cite[Theorem~2.5(b)]{k-m} that the homomorphism $a$ coincides with the Hirzebruch defect on $E$ and hence the two homomorphisms agree on all of $\Imm[S^3,\R^4]$.
\end{proof}

\section{Explicit constructions of singular Seifert surfaces}\label{sect:main}
In this section we first construct singular Seifert surfaces for the immersions $S^{3}\to\R^{4}$ described in Section \ref{sec:intr} and then use these to prove Theorem \ref{t:main}.

We will use the following notation throughout this section. The $2$-plane bundle over $S^{2}$ of Euler number $k$ will be denoted by $\xi_k$, $E(\xi_k)$ will denote the total space of the corresponding $2$-disk bundle, and $L(k,1)$ the total space of the corresponding circle bundle ($L(k,1)$ is a $(k,1)$-lens space). Furthermore, recall from Section \ref{sec:intr} that the normal bundle of a self-transverse immersion $f\colon S^{2}\to\R^{4}$ with algebraic number of double points equal to $-n$ is $\xi_{2n}$. Thus, micro-extension gives an immersion $F_{2n}\colon E(\xi_{2n})\to\R^{4}$ which agrees with $f$ when restricted to the $0$-section.
Precomposing with the universal covering $\pi_{2n}\colon S^{3}\to L(2n,1)=\pa E(\xi_{2n})$, we get the immersions
\[
g_n=F_{2n}\circ\pi_{2n}\colon S^{3}\to\R^{4}, \quad n>0.
\]

We next construct natural fillings of the maps $g_n$. To this end, note that the space $L(1,1)=\partial{E(\xi_1)}$ is $S^3$, thought of as the total space of the Hopf fibration, and that $E(\xi_1)$ is $\C P^{2}_{0}$, the complement of an open disk in the complex projective plane. Consider the $k$-fold branched cover
\[
\Pi_{k}\colon E(\xi_1)\to E(\xi_k)
\]
which extends the universal cover
\[
\pi_k\colon S^{3}=\pa E(\xi_1)\to\pa E(\xi_k)=L(k,1),
\]
and which is the $k$-fold branched cover with a single branch point at the origin in each fiber disk. The map $G_{n}\colon \C P^{2}_{0}\to\R^{4}$,
\[
G_n = F_{2n}\circ \Pi_{2n}
\]
then extends the immersion $g_n\colon S^{3}\to\R^{4}$ but is {\em not} a singular Seifert surface for $g_n$ for the following reason. The rank of $dG_n$ equals $4$ outside the $0$-section in $E(\xi_1)$ (in $\C P^{2}_{0}-\C P^{1}$) and equals $2$ at any point on the $0$-section (along $\C P^{1}\subset \C P^{2}_{0}$) and thus its rank $2$ points are not isolated.

The rank $2$ points of $G_n$ forming a submanifold is reminiscent of the critical locus of a Morse-Bott function. In analogy with the Morse-Bott case, we will construct a perturbation of $G_n$ below which creates a map with exactly two isolated rank 2 points. Furthermore, we will compute the indices of these rank 2 points. In fact, our perturbation of $G_n$ has the form $F_{2n}\circ\Pi_{2n}^{\epsilon}$ where $\Pi_{2n}^{\epsilon}$ is a perturbation of the branched cover $\Pi_{2n}$. In order to construct the perturbation, we use the following local coordinate description of the bundles $\xi_k$.

Consider the decomposition of $S^{2}$ into two disks, the northern- and southern hemispheres $D_{N}$ and $D_{S}$, respectively. After identification of these disks with the unit disk  $D=\{w\in\C\ST|w|\le1\}$ in the complex plane $\C$, we have
\[
S^2=D_S\cup_\psi D_N,
\]
where $\psi\co\partial{D_S}\to\partial{D_N}$ is the map given by $\psi(e^{i\theta})=e^{-i\theta}$. The total spaces $E(\xi_k)$ can then be described as follows:
\[
E(\xi_k)= D_S\times D\cup_{\psi_k} D_N\times D,
\]
where $\psi_k\colon \pa D_S\times D\to\pa D_N\times D$ is given by
\[
\psi_k(e^{i\theta},z)=(e^{-i\theta},e^{-ki\theta}z).
\]
To see this, it is sufficient to note that the clutching function described gives a bundle with first Chern class (Euler number) equal to $k$.

In these local coordinates the map $\Pi_{k}\colon E(\xi_1)\to E(\xi_k)$ is the following:
\[
\Pi_k(w,z)=
\begin{cases}
(w,z^{k})\in D_S\times D &\text{if }(w,z)\in D_S\times D,\\
(w,z^{k})\in D_N\times D &\text{if }(w,z)\in D_N\times D.
\end{cases}
\]
To see that this formula defines a map as claimed, note that on the overlap $(e^{i\theta},z)\in \pa D_S\times D\subset E(\xi_1)$ is identified with $(e^{-i\theta},e^{-i\theta}z)\in \pa D_N\times D\subset E(\xi_1)$. The former maps to $(e^{i\theta},z^{k})\in \pa D_S\times D\subset E(\xi_k)$ which is identified with $(e^{-i\theta},e^{-ki\theta}z^{k})\in \pa D_N\times D\subset E(\xi_k)$ which in turn is the image of the latter.

In order to perturb $\Pi_{k}\colon E(\xi_1)\to E(\xi_k)$ we consider the restriction of this map to fibers: $h(z)=z^{k}$. Let $c$ be any complex number and let $h_{c}(z)=z^{k}+\bar z c$.
\begin{lem}\label{lem:2dimmap}
The singular set of $h_{c}$ is the circle
\[
\Sigma=\bigl\{z\ST |z|=(k^{-1}|c|)^{\frac{1}{k-1}}\bigr\}.
\]
If $c\ne 0$, then the rank of $dh_{c}$ equals $1$ along $\Sigma$ and the kernel field $\ker(dh_{c})$ is transverse to $\Sigma$ except at $k+1$ cusp points where the kernel field has simple tangencies with $\Sigma$. In particular, $h_{c}$ is a locally stable map for $c\ne 0$.
\end{lem}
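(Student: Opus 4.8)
The plan is to analyze the map $h_c(z)=z^k+\bar z c$ directly as a map $\R^2\to\R^2$ by computing its Jacobian and then studying the geometry of the kernel field along the singular set. First I would write $z=re^{i\theta}$ and compute the real differential. A convenient way is to use the Wirtinger derivatives: for a map $w=h(z,\bar z)$ the Jacobian determinant equals $|\partial_z h|^2-|\partial_{\bar z} h|^2$. Here $\partial_z h_c=kz^{k-1}$ and $\partial_{\bar z}h_c=c$, so the Jacobian is $k^2|z|^{2(k-1)}-|c|^2$, which vanishes exactly on the circle $|z|=(k^{-1}|c|)^{1/(k-1)}$; this gives $\Sigma$. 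For $c\neq0$ the vanishing is to first order in $|z|$ across $\Sigma$ (the radial derivative of $k^2|z|^{2(k-1)}-|c|^2$ is nonzero there since $k\geq 2$), and moreover $\partial_z h_c=kz^{k-1}\neq 0$ on $\Sigma$, so $dh_c$ has rank exactly $1$ along $\Sigma$ and rank $2$ off $\Sigma$ near it; in particular $\Sigma$ is a fold-type singular set to first approximation and $h_c$ has only $\Sigma^{1}$ (corank one) singularities.

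Next I would identify the kernel field. Since $dh_c$ has corank one along $\Sigma$, $\ker(dh_c)$ is a line field defined along $\Sigma$ (and extends to a neighborhood, or one works with the restriction). A vector $v$ (viewed as a complex number) is in the kernel iff $kz^{k-1}v+c\bar v=0$. Writing this out on $\Sigma$, where $k^2|z|^{2(k-1)}=|c|^2$, one sees the kernel direction is $v\propto \overline{(kz^{k-1})}\cdot(\text{unit})$ — concretely, if $kz^{k-1}=|c|e^{i\alpha(z)}$ then the kernel line at $z$ is spanned by $e^{-i\alpha(z)/2}$ up to sign (because $kz^{k-1}v=-c\bar v$ forces $\arg v$ to be determined mod $\pi$ by $\alpha$ and $\arg c$). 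Parametrizing $\Sigma$ by $\theta$ via $z=\rho e^{i\theta}$ with $\rho=(k^{-1}|c|)^{1/(k-1)}$, we get $\alpha(z)=(k-1)\theta+\text{const}$, so the kernel direction rotates at angular rate $-(k-1)/2$ as $\theta$ runs over $[0,2\pi)$. The tangent direction to $\Sigma$ is $ie^{i\theta}$, rotating at rate $1$. A cusp (a point where the kernel is tangent to $\Sigma$, i.e. a $\Sigma^{1,1}$ point) occurs where the kernel line agrees with the tangent line; comparing the two rotation rates, the relevant phase difference winds $(1-(-(k-1)/2))\cdot 2\pi$... but since lines are defined mod $\pi$ one should track the relative angle mod $\pi$, which changes by total amount $\bigl(1+\tfrac{k-1}{2}\bigr)2\pi=(k+1)\pi$ over one loop, hence crosses $0 \bmod \pi$ exactly $k+1$ times generically — these are the $k+1$ cusp points. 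I would then check these tangencies are simple (the relative angle crosses with nonzero derivative, which holds since both rates are constant so the crossings are transverse), which gives that each is a genuine cusp ($\Sigma^{1,1,0}$) and no higher Morin singularity occurs. Finally, having only folds along $\Sigma\setminus\{k+1 \text{ cusps}\}$ and nondegenerate cusps at those points, Morin's theorem (or the standard local normal forms for stable maps of surfaces) gives that $h_c$ is locally stable.

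The main obstacle I expect is the bookkeeping in the cusp count: one must be careful that "the kernel field has a simple tangency with $\Sigma$" is the correct characterization of a cusp, that lines (not vectors) are being compared so the count is $k+1$ rather than $2(k+1)$ or $k-1$, and that the crossings are transverse so no swallowtails appear — equivalently, that the map $\Sigma\to\R P^1$ given by the kernel line relative to the tangent line has degree-type data forcing exactly $k+1$ simple zeros of the appropriate section. This is essentially a winding-number computation once the rotation rates $1$ and $-(k-1)/2$ are pinned down, but getting the constant and the mod-$\pi$ convention right is where care is needed; everything else is routine local analysis of the quadratic-type Jacobian $k^2|z|^{2(k-1)}-|c|^2$.
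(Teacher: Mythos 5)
Your proposal is correct and takes essentially the same route as the paper: compute the real differential $dh_{c}(\xi)=kz^{k-1}\xi+c\bar\xi$ and read off the singular circle from the kernel condition $kz^{k-1}=-c\bar\xi/\xi$, which forces $k|z|^{k-1}=|c|$. The paper dismisses the kernel-field analysis as a ``straightforward consequence,'' and your winding-number bookkeeping (kernel line rotating at rate $-(k-1)/2$ versus tangent line at rate $1$, giving relative rate $(k+1)/2$ and hence $k+1$ transverse zeros mod $\pi$) is exactly the computation needed to fill that in.
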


\begin{proof}
If $\xi$ is a tangent vector to $\C$ thought of as a complex number we have
\[
dh_{c}(\xi)=k z^{k-1}\xi+c\bar \xi.
\]
Consequently, $dh_{c}$ has kernel at points $z$ where the equation $k z^{k-1}\xi+c\bar \xi=0$ has non-trivial solutions. This is the case if
\begin{equation}\label{e:kernel}
k z^{k-1}=-c\bar\xi/\xi,
\end{equation}
or in other words when $z\in\Sigma$. The properties of the kernel field are straightforward consequences of \eqref{e:kernel}.
\end{proof}

We next deform the map $\Pi_k$ using the perturbation described above on fibers. Note however that $c$ in the above discussion must then be replaced by the image of a section of $E(\xi_1)$ and hence we cannot avoid rank $2$ points.

Fix $\epsilon\in(0,\tfrac14)$. Define $\Pi^{\epsilon}_{k}\colon E(\xi_1)\to E(\xi_k)$ as follows, using the local coordinates introduced above,
\begin{equation}\label{e:Pie}
\Pi^{\epsilon}_k(w,z)=
\begin{cases}
\left(w,(1-\epsilon)z^{k}+\epsilon w\bar z \right)\in D_{S}\times D
&\text{if }(w,z)\in D_S\times D,\\
\left(w,(1-\epsilon)z^{k}+\epsilon w^{k}\bar z \right)\in D_{N}\times D
&\text{if }(w,z)\in D_N\times D.
\end{cases}
\end{equation}
To see that \eqref{e:Pie} indeed defines a map, note that for $(w,z)=(e^{i\theta},z)\in\pa D_S\times D$,
\[
\psi_{k}\left(e^{i\theta},(1-\epsilon)z^{k}+\epsilon e^{i\theta}\bar z \right)=
\left(e^{-i\theta},(1-\epsilon)e^{-ik\theta}z^{k}+\epsilon e^{-i(k-1)\theta}\bar z\right)\in \pa D_{N}\times D,
\]
which is the image of the corresponding point $\psi_1(w,z)=(e^{-i\theta},e^{-i\theta} z)\in\pa D_N\times D$ according to the second row in the right hand side of \eqref{e:Pie}. Furthermore if $|z|\le 1$ and $|w|\le 1$ then $|(1-\epsilon)z^{k}+\epsilon w\bar z|\le 1$ so that second components in the vectors in the right hand side of \eqref{e:Pie} lie in $D$.

\begin{lem}\label{lem:sourceimm}
The map $\Pi^{\epsilon}_{k}\colon \C P^{2}_{0}\to E(\xi_k)$ is a map with two isolated rank 2 points: $p_S$ corresponding to $(0,0)\in D_{S}\times D$ and $p_N$ corresponding to $(0,0)\in D_{N}\times D$. Furthermore, $\ind_{p_S}(\Pi_{k}^{\epsilon})=k-1$ and
$\ind_{p_N}(\Pi_{k}^{\epsilon})=k(k-1)$.
\end{lem}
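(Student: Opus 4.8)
The plan is to analyse the differential of $\Pi^\epsilon_k$ fibrewise, read off the rank $2$ locus, and then reduce the two index computations to local degree computations for holomorphic maps.

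First I would note that in each of the two charts $\Pi^\epsilon_k$ has the shape $(w,z)\mapsto(w,\phi(w,z))$, where $\phi(w,z)=(1-\epsilon)z^k+\epsilon w\bar z$ over $D_S$ and $\phi(w,z)=(1-\epsilon)z^k+\epsilon w^k\bar z$ over $D_N$. Since the base coordinate $w$ is transported identically, the differential $d(\Pi^\epsilon_k)_{(w,z)}$ is block lower triangular with invertible $2\times2$ upper left block, so its rank is $2$ plus the rank of the fibre derivative, the real-linear self-map $\delta z\mapsto\phi_z\,\delta z+\phi_{\bar z}\,\overline{\delta z}$ of $\C$. Such a map $\xi\mapsto a\xi+b\bar\xi$ has rank $0$ precisely when $a=b=0$. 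As $\phi_z=k(1-\epsilon)z^{k-1}$ vanishes only at $z=0$, while $\phi_{\bar z}$ equals $\epsilon w$ (resp.\ $\epsilon w^k$) and hence vanishes only at $w=0$, the only rank $2$ point in each chart is the origin, namely $p_S$ and $p_N$; for $k\ge2$ there are no others (and for $k=1$ there are none at all, in agreement with the index formula below). On the gluing region one has $|w|=1\ne0$, so no rank $2$ point occurs there either. Finally, on the punctured neighbourhood $\{(w,z)\ne(0,0)\}$ of each of $p_S,p_N$ the fibre derivative is already nonzero, so the rank there is at least $3$ and the two rank $2$ points are isolated.

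Next I would compute the index at $p_S$. Using the real coordinates $x=(x_1,x_2,x_3,x_4)=(\Re w,\Im w,\Re z,\Im z)$ on the source and the corresponding bundle coordinates on the target $E(\xi_k)$, the map is literally in the normal form of Definition \ref{defn:index} with $A+iB=\phi$; the associated $2\times2$ matrix of $x_3,x_4$-derivatives of $(A,B)$ is the real form of the fibre derivative and, by the previous paragraph, vanishes only at the origin. Using $\partial_{x_3}(A+iB)=\phi_z+\phi_{\bar z}$ and $\partial_{x_4}(A+iB)=i(\phi_z-\phi_{\bar z})$, one identifies the map $\hat g$ of Definition \ref{defn:index}, under the prescribed identification $\R^4\cong\C^2$, with
\[
(w,z)\longmapsto\bigl(k(1-\epsilon)z^{k-1}+\epsilon w,\ i\bigl(k(1-\epsilon)z^{k-1}-\epsilon w\bigr)\bigr).
\]
Post-composing with the $\C$-linear, hence orientation-preserving, automorphism $(P,Q)\mapsto(P-iQ,P+iQ)$ of $\C^2$ and then rescaling the two factors by the positive constants $2k(1-\epsilon)$ and $2\epsilon$ turns $\hat g$ into the holomorphic map $(w,z)\mapsto(z^{k-1},w)$. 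Its local degree at $0$ is the product of the mapping degrees of $z\mapsto z^{k-1}$ and $w\mapsto w$, that is $(k-1)\cdot1$, so $\ind_{p_S}(\Pi^\epsilon_k)=k-1$. The computation at $p_N$ is word for word the same except that $\phi_{\bar z}=\epsilon w^k$, so the identical reductions replace $w$ by $w^k$ and produce the holomorphic map $(w,z)\mapsto(z^{k-1},w^k)$, whose local degree at $0$ is $(k-1)\cdot k$; hence $\ind_{p_N}(\Pi^\epsilon_k)=k(k-1)$.

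The only step that requires real care rather than routine bookkeeping is the orientations: one must verify that the chosen real coordinates induce the complex orientation of $\C P^2_0=E(\xi_1)$ (which holds because $w$ is a holomorphic coordinate on $S^2$ and $z$ is the linear fibre coordinate of the holomorphic bundle $\xi_1$), so that the index has the right sign, and that the linear changes used to simplify $\hat g$ are orientation-preserving on the source; by Remark \ref{rem:ori} the orientation of the target is irrelevant. Once this is pinned down, the fact that $\hat g$ coincides, up to orientation-preserving linear changes, with an honestly holomorphic map forces all local degrees to be positive and equal to the obvious complex multiplicities, which yields the two stated indices.
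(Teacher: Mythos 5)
Your proof is correct and follows essentially the same route as the paper: a fibrewise analysis of the differential locates the two rank $2$ points, and the index computation reduces $\hat g$ by a complex-linear (hence orientation-preserving) change of coordinates to the holomorphic model $(w,z)\mapsto(z^{\ell-1},w^m)$, whose local degree at the origin is $m(\ell-1)$. The paper packages this second step as a single model map $T(z,w)=\bigl(w,z^\ell+\bar z w^m\bigr)$ applied with $(m,\ell)=(1,k)$ and $(m,\ell)=(k,k)$, but the content is identical.
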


\begin{proof}
If a tangent vector to $D_S\times D$ or $D_N\times D$ is viewed as a pair of complex numbers $(\eta,\xi)$ and if $\epsilon'=\frac{\epsilon}{1-\epsilon}$ then, with $h_c$ as in Lemma \ref{lem:2dimmap},
\begin{align*}
&\bigl[d\Pi^{\epsilon}_k(w,z)\bigr](\eta,\xi)=\\
&\begin{cases}
\left(\eta,(1-\epsilon)\bigl[dh_{\epsilon' w}(z)\bigr](\xi)+\epsilon\bar z\eta \right)\in T(D_{S}\times D)
&\text{if }(w,z)\in D_S\times D,\\
\left(\eta,(1-\epsilon)\bigl[dh_{\epsilon' w^{k}}(z)\bigr](\xi)+\epsilon kw^{k-1}\bar z \eta \right)\in T(D_{N}\times D)
&\text{if }(w,z)\in D_N\times D.
\end{cases}
\end{align*}
Lemma \ref{lem:2dimmap} then implies that $p_S$ and $p_N$ are the only rank $2$ points of $\Pi^{\epsilon}_{k}$.

To compute the indices we consider the map $T\colon \C^{2}\to\C^{2}$
\[
T(z,w)=\left(w,z^\ell+\bar{z}{w}^m\right),
\]
where $\ell\ge 2$ and $m\ge 1$. Clearly the only rank $2$ point of $T$ is the origin.
To compute the index $\ind_{0}(T)$, write $K(w,z)=z^\ell+\bar{z}{w}^m$, then
according to Definition~\ref{defn:index}, the index equals the degree near $0$ of the map $\kappa\colon\C^{2}\to\C^{2}$ where
\begin{align*}
\kappa(w,z)=\left(\frac{\pa K}{\pa z}+\frac{\pa K}{\pa\bar{z}},\,\,i\left(\frac{\pa K}{\pa z}-\frac{\pa K}{\pa\bar{z}}\right)\right)
=\left(\ell z^{\ell-1}+w^m,i(\ell z^{\ell-1}-w^m)\right).
\end{align*}

Note that there exists a complex linear isomorphism $L$ of $\C^{2}$ such that
\begin{align*}
L\circ \kappa(w,z)=\left(w^m,z^{\ell-1}\right).
\end{align*}
Since $\kappa$ is holomorphic, the local degree at any inverse image of a regular value equals $+1$. Hence if $\epsilon$ is any sufficiently small regular value of $\kappa$ we have
\[
\ind_{0}(T)=\deg_{\epsilon}(\kappa)=m(\ell-1).
\]
Taking $(m,\ell)=(1,k)$ and $(m,\ell)=(k,k)$, the statements on the indices of $p_S$ respectively $p_N$ follow.
\end{proof}

\begin{proof}[of Theorem~\ref{t:main}]
Fix a small $\epsilon>0$ and
write $\pi^{\epsilon}_{k}=\Pi^{\epsilon}_{k}|_{S^{3}}$, $S^{3}=\pa E(\xi_1)$ and note that \[
\pi^{s\epsilon}_{k}\colon S^{3}\to E(\xi_k),\quad 0\le s\le 1
\]
gives a regular homotopy connecting the immersions $\pi_k\colon S^{3}\to L(k,1)\subset E(\xi_{k})$ to $\pi_{k}^{\epsilon}\colon S^{3}\to E(\xi_k)$. Consequently the map
\[
g_n^{\epsilon}=F_{2n}\circ\pi^{\epsilon}_{2n}\colon S^{3}\to\R^{4}
\]
is regularly homotopic to $g_n\colon S^{3}\to\R^{4}$. Furthermore, since $F_{2n}\colon E(\xi_{2n})\to\R^{4}$ is an immersion, Lemma \ref{lem:sourceimm} implies that
\[
G_{n}^{\epsilon}=F_{2n}\circ\Pi^{\epsilon}_{2n}\colon \C P^{2}_{0}\to\R^{4}
\]
is a singular Seifert surface of $g_{n}^{\epsilon}$ with
\[
\sharp\Sigma^{2}(G_{n}^{\epsilon})=2n-1 + 2n(2n-1)=4n^{2}-1.
\]
Proposition \ref{prop:hirzebruch} then gives the Hirzebruch defect
\[
\hdef(g_n)=\hdef(g_{n}^{\epsilon})=-3\sigma(\C P^{2}_{0})-4n^{2}+1=-4n^{2}-2.
\]
By \cite[Theorem~2.2(b)]{k-m}, the normal degree of the immersion $f_{2n}\colon \pa E(\xi_{2n})\to\R^{4}$ equals $\chi(E(\xi_{2n}))=\chi(S^2)=2$.
Since $\pi_{2n}\co S^3\to L(2n,1)$ has degree $2n$,
the composition $g_n=f_{2n}\circ\pi_{2n}$
has normal degree
\[
\ndeg(g_n)=4n
\]
(see also \cite[\S{IV}]{milnor}). Therefore, by the formula \eqref{eq:smale}, we have
\[
\Omega(g_n)=\left(4n-1,\frac{4n^2+2-2\cdot(4n-1)}{4}\right)=\left(4n-1,(n-1)^{2}\right).
\]
With this established, remaining statements are immediate consequences of the formulas in Section \ref{subsect:cobordism}.
\end{proof}

\section{Relations to other results}\label{sect:remarks}
The immersion $g_1$ was apparently first studied
by Milnor \cite[(11), \S{IV}]{milnor} and has since appeared
in several papers (e.\,g.\ \cite{l-s,ekholm}).
It was shown in \cite[Proposition~8.4.1]{ekholm} that
$g_1$ has odd Brown invariant (in $\Z_8$) and hence
represents a generator in $\Z_8$
(see also \cite[Lemma~1.7(a)]{szucs} and \cite[Remark~3.6]{takase}).
Theorem \ref{t:main} implies this result and furthermore that
the triple point invariant $\lambda(g_1)\in\Z_3$ of $g_1$ vanishes.
For a self-transverse immersion regularly homotopic to $\iota\circ g_1\co S^3\to\R^4\to\R^5$, where $\iota$ denotes the natural inclusion of $\R^{4}$ into $\R^{5}$, the triple point invariant is defined to be the linking number modulo $3$ of its image and a push-off of its double point set. (For details on this definition, see \cite[\S6.2]{ekholm}, and also \cite[\S9.1, p.\,189]{ekholm}, where the letter ``$\tau$'' should be replaced by ``$\lambda$''.) To see this, note that $\pi^{S}_{3}\approx\Z_{24}=\Z_{8}\oplus\Z_{3}$, where the first summand corresponds to the Brown invariant and the second to the linking invariant $\lambda$. 

\begin{rem}\label{rmk:linking}
The fact that $\lambda(g_1)=0$ can also be proved directly as follows. Let $f\colon S^{2}\to\R^{4}$ be an immersion with one double point of negative sign. Consider a regular homotopy of $\iota\circ f\colon S^{2}\to\R^{5}$ with support in a small neighborhood of one of the preimages $q$ of the double point of $f$ and which lifts the local sheet of $f$ around $q$ up slightly in the fifth direction. The map resulting from this deformation of $\iota\circ f$ is an embedding. We denote it $\tilde f\colon S^{2}\to\R^{5}$. Consider the map $f_2\colon L(2,1)=\R P^{3}\to\R^{4}$ obtained by micro-extension of $f$ and the composition $\iota\circ f_2\colon\R P^3\to\R^{5}$. The regular homotopy of $\iota\circ f$ induces a regular homotopy of $\iota\circ f_2$ to an embedding which we denote $\tilde f_2\colon \R P^{3}\to\R^{5}$. The image of $\tilde f_2$ lies in a small tubular neighborhood of $\tilde f(S^{2})$. 

We next note that the normal bundle of $\tilde f_2$ has a natural trivialization: one section $\nu_0$ is induced by the normal vector field $\nu$ of $f_2$ in $\R^{4}$ and the other one $\nu_1$ is induced by the fifth coordinate vector $e_5$ viewed as a normal vector of $\iota\circ f_2$. Define $\tilde g_1=\tilde f_2\circ\pi_2\colon S^{3}\to\R^{5}$, where $\pi_2\colon S^{3}\to\R P^{3}$ is the universal covering map. Then $\tilde g_1$ is regularly homotopic to $\iota\circ g_1=\iota\circ f_2\circ\pi_2$. Furthermore, $\tilde g_1$  self intersects non-transversely; in fact it is a $2-1$ map onto its image $\tilde f_2(\R P^{3})$. 

In order to perturb out of this non-transverse situation we consider $S^{3}\subset \C^{2}$ as the total space of the Hopf fibration which maps a point in $S^{3}$ to the complex line in $\C P^{1}\approx S^{2}$ it generates. In particular, Hopf fibers are intersections of $S^{3}$ with complex lines in $\C^{2}$. Let $(z_1,z_2)=(x_1+iy_1,x_2+iy_2)$ be coordinates on $\C^{2}$. By definition, the map $\tilde g_1$ satisfies $\tilde g_1(z_1,z_2)=\tilde g_1(-z_1,-z_2)$. Define the great $2$-spheres 
\[
S_0^{2}=\{(x_1+iy_1,x_2+iy_2)\in S^{3}\colon y_2=0\}\quad\text{and}\quad
S_1^{2}=\{(x_1+iy_1,x_2+iy_2)\in S^{3}\colon x_2=0\}.
\]
Let $S_{01}=S^{2}_0\cap S^{2}_1$. Then $S_{01}$ is the Hopf fiber through the point $(1,0)$. Furthermore, any other Hopf fiber $S\ne S_{01}$ satisfies 
\begin{equation}\label{eq:genericHopf}
S_{0}^{2}\cap S=\{p_0,-p_0\}\text{ and }S_1^{2}\cap S=\{p_1,-p_1\},\,\,\text{where } p_0\ne \pm p_1.
\end{equation}
We pick coordinates so that the image of the Hopf fiber $S_{01}$ under the Hopf projection lies far from the self intersection points of $f$.

Consider two functions $b_j\colon S^{3}\to\R$ with the following properties. The point $0\in\R$ is a regular value of $b_j$ and $b_j^{-1}(0)=S^{2}_j$, $j=0,1$. Then, for sufficiently small $\epsilon>0$, the map $a_{\epsilon}\colon S^{3}\to\R^{5}$,
\[
a_\epsilon(x)=\tilde g_1(x)+\epsilon \bigl(b_0(x)\nu_0(x)+b_1(x)\nu_1(x)\bigr)
\]
is a self transverse immersion regularly homotopic to $\tilde g_1$ with self intersection along $\tilde g_1(S_{01})$.

To see this, note first that $\tilde g_1$ maps distinct Hopf fibers into distinct fibers of the tubular neighborhood of $\tilde f$. Consider next a Hopf fiber $S\ne S_{01}$. It follows from \eqref{eq:genericHopf} that if $p\in S$ then $b_0(p)\ne b_0(-p)$ or $b_1(p)\ne b_1(-p)$. Consequently, $a_\epsilon$ does not have any self intersections outside $S_{01}$. Since both $b_0$ and $b_1$ vanish along $S_{01}$, it follows that $a_{\epsilon}|_{S_{01}}=\tilde g_1|_{S_{01}}$ and hence $\gamma=\tilde g_{1}(S_{01})$ is a self intersection circle of $a_{\epsilon}$ as claimed. Consider a pair of antipodal points $\pm p$ in $S_{01}$. Let $n_0$ and $n_1$ denote normal vectors of $S_{0}^{2}$ and $S_1^{2}$, respectively. Then
\[
da_{\epsilon}|_{\pm p}(n_j) = \pm d\tilde g_1|_{p}(n_j) + \epsilon db_j|_{\pm p}(n_j)\nu_j,\quad\text{for }j=0,1,
\]
since the Hopf-fiber in direction $n_j$ at $p$ is the Hopf fiber in direction $-n_j$ at $-p$. By definition, $db_j(n_j)\ne 0$ along $S_j^{2}$ and it follows that the self intersection $\gamma$ is transverse. 

By our choice of location of the fiber $S_{01}$ (far from the self intersection points of $f$) the map $\tilde g_1$ agrees with the map $\iota\circ g_1$ in a neighborhood of $S_{01}$ and in particular maps Hopf fibers $S$ near $S_{01}$ into fibers in the normal bundle of $f\colon S^{2}\to\R^{4}$. It follows that $a_{\epsilon}$ does not map any points of $S^{3}-S_{01}$ into the disk with boundary $\gamma$ in the normal bundle fiber of $f$ in which $\gamma=\iota(g_1(S_{01}))$ lies. Thus, using $D=S_1^{2}\cap b_0^{-1}(-\infty,0]$ as Seifert surface for $S_{01}\subset S^3$ and the outward normal vector field of $S_{01}\subset D$ (which is gradient like for $b_0|_{S_1^{2}}$) to shift $\gamma=a_{\epsilon}(S_{01})$ off of $a_{\epsilon}(S^{3})$ we find that the shifted curve $\gamma'$ lies in the normal fiber of $f$ outside the self intersection circle $\gamma$. 

It follows that a large annulus $A$ in fiber of $\gamma$, with one boundary component $\gamma'$, after small perturbation supported away from the boundary, intersects $f(S^{2})$ only in isolated points far from $\gamma'$. Consequently, $A$ can be made disjoint from $a_{\epsilon}(S^{3})$ by shifting in the $e_5$-direction. Capping the large annulus with a disk in a large sphere gives a disk $\bar A$ with $\pa\bar A=\gamma'$ and such that $\bar A$ is disjoint from $a_{\epsilon}(S^{3})$.  Thus, the linking number between $a_{\epsilon}(S^{3})$ and $\gamma'$ vanishes and hence $\lambda(g_1)=0$.    
\end{rem}

Melikhov studied in \cite[Example~4]{melikhov} a construction similar to the one above. More precisely, he looked at the composition $e\circ q\co S^3\to\R^4$
of the universal $8$-fold covering $q\co S^3\to Q^3$ and
an embedding $e\co Q^3\to\R^4$
of the quaternion space $Q^3=S^3/\{\pm1,\pm i,\pm j,\pm k\}$ and
showed that it represents an element with non-trivial
stable Hopf invariant in the stable $3$-stem $\pi_3^S\approx\Z_{24}$.
Here, we show that his immersion essentially coincides with $g_2$.

\begin{prop}
The immersion $e\circ q\co S^3\to\R^4$ coincides with $g_2$ up to orientation and regular homotopy.
\end{prop}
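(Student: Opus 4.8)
The plan is to identify the quaternion space $Q^3 = S^3/Q_8$ with the lens space $L(4,1)$ up to orientation, so that the universal $8$-fold covering $q\co S^3\to Q^3$ becomes (up to orientation) the composition $S^3 \xrightarrow{\pi_4} L(4,1) \to Q^3$ where the second map is the intermediate $2$-fold covering. Concretely, $Q_8$ contains the center $\{\pm1\}$, and $Q_8/\{\pm1\}\cong\Z_2\times\Z_2$ has three subgroups of order $2$; choosing the one generated by (the image of) $i$, we get $\langle i\rangle \cong \Z_4 \subset Q_8$ with $S^3/\langle i\rangle = L(4,1)$ and $L(4,1)/(\Z_2) = Q^3$. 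Thus $q$ factors through $\pi_4\co S^3\to L(4,1)$. The point is then that an embedding $e\co Q^3\to\R^4$ pulls back to an immersion $L(4,1)\to\R^4$ which we must show is regularly homotopic to $f_4$, the micro-extension immersion of a self-transverse $f\co S^2\to\R^4$ with algebraic double point number $-2$.

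The key steps, in order: first, recall (or cite, e.g.\ \cite{hirsch,hughes-melvin}) that up to regular homotopy there is a \emph{unique} immersion $L(4,1)\to\R^4$ lifting any given embedding $Q^3\to\R^4$ through the covering $L(4,1)\to Q^3$, provided the normal data match --- but more efficiently, one observes that $Q^3$ bounds a natural $4$-manifold (a plumbing / the $D^2$-bundle picture) compatible with $e$, so that $e\circ q$ extends over a $4$-manifold and its Smale invariant, hence its regular homotopy class, is pinned down by Theorem~\ref{t:main}'s formula \eqref{eq:smale}. Second, compute $\ndeg(e\circ q)$ and $\hdef(e\circ q)$: since $Q^3$ bounds the disk bundle $E(\xi_4)$ over $S^2$ (indeed $Q^3 = L(4,1) = \partial E(\xi_4)$) via an embedding realizing $e$ by an immersion of $E(\xi_4)$ into $\R^4$, the normal degree and signature computations are exactly those in the proof of Theorem~\ref{t:main} with $n=2$, giving $\ndeg = 8$ and, via Proposition~\ref{prop:hirzebruch} with the same singular Seifert surface $G_2^\epsilon$, $\hdef = -18$; hence $\Omega(e\circ q) = (7,1) = \Omega(g_2)$. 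Third, since $\Omega$ is a complete invariant of regular homotopy, conclude $e\circ q \simeq g_2$ (up to the orientation ambiguity, which flips $S^3$ and is absorbed into the ``up to orientation'' clause).

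An alternative, more hands-on route: exhibit an explicit regular homotopy directly. The embedding $e\co Q^3\to\R^4$ used by Melikhov may be taken, up to isotopy, as the boundary of a tubular-neighborhood-type embedding, and one checks that the universal cover of this neighborhood in $\R^4$ is (regular homotopic to) $F_4\co E(\xi_4)\to\R^4$ from Section~\ref{sect:main}; restricting to boundaries and precomposing with $\pi_4$ then literally gives $g_2$ after composing with the $2$-fold covering $L(4,1)\to Q^3 $, i.e.\ $e\circ q = (e\circ(\text{cover}))\circ\pi_4 = f_4\circ\pi_4 = g_2$ up to regular homotopy and orientation.

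The main obstacle I anticipate is the bookkeeping of \emph{orientations and the choice of order-$2$ subgroup}: $Q_8/\{\pm 1\}$ has three inequivalent $\Z_2$'s, and one must verify that the resulting $L(4,1)$ is the one that bounds $E(\xi_8{}_{/4})=E(\xi_4)$ with the \emph{correct} orientation so that the signature comes out with the sign matching the ``algebraic double point number $-2$'' convention; getting this wrong flips the sign of $\hdef$ and changes $(7,1)$ to something incompatible with $g_2$ but compatible with $\overline{g_2}$ (orientation-reversed), which is precisely why the statement only claims agreement ``up to orientation.'' Verifying that Melikhov's specific embedding $e$ is isotopic to the disk-bundle-boundary embedding (rather than a knotted variant, which cannot change the regular homotopy class of $e\circ q$ anyway, since that class is determined by the $4$-manifold-with-boundary data and $\ndeg,\hdef$ are isotopy invariants of the filling) is a minor additional point that I would address by the Smale-invariant argument rather than by explicit isotopy.
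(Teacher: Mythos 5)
Your factorization of $q$ through the intermediate cover $L(4,1)=S^3/\Z_4\to Q^3$ via the extension $1\to\Z_4\to\Gamma\to\Z_2\to 1$ is exactly the first half of the paper's argument, and your ``alternative, more hands-on route'' is in outline the paper's proof. But as written the argument has two genuine problems. First, your primary route asserts ``$Q^3=L(4,1)=\partial E(\xi_4)$'', which is false: $\pi_1(Q^3)$ is the quaternion group of order $8$ while $\pi_1(L(4,1))=\Z_4$, and $Q^3$ does not bound $E(\xi_4)$ --- it bounds the \emph{twisted} $D^2$-bundle over $\R P^2$ of Euler number $\pm 2$. Compounding this, you then compute $\hdef(e\circ q)$ ``via Proposition~\ref{prop:hirzebruch} with the same singular Seifert surface $G_2^\epsilon$''; but $G_2^\epsilon$ is a singular Seifert surface for $g_2^\epsilon$, and using it for $e\circ q$ presupposes that $e\circ q$ is regularly homotopic to $g_2$, which is the statement to be proved. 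The Smale-invariant route cannot be run without first exhibiting an independent filling of $e\circ q$, and none is given.

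Second, the key geometric input is missing from both of your routes: one must explain \emph{why} the immersion of $L(4,1)$ obtained by pulling $e$ back along the double cover $L(4,1)\to Q^3$ is regularly homotopic to $f_4$. The paper gets this from Massey's proof of the Whitney conjecture \cite{massey1}: an embedded $\R P^2\subset\R^4$ has normal Euler number $\pm2$, so the tubular neighborhood $N(\R P^2)$ is the twisted $D^2$-bundle of Euler number $\pm2$ over $\R P^2$, whose boundary is $Q^3$ by Price \cite{price}; pulling back along the orientation double cover $\rho\co S^2\to\R P^2$ yields the $D^2$-bundle of Euler number $\pm4$ over $S^2$, and for a suitable choice of $\hat e\co\R P^2\hookrightarrow\R^4$ the composition $\hat e\circ\rho$ is an immersed sphere with normal Euler class $4$, hence regularly homotopic to the sphere immersion underlying $F_4$ (regular homotopy classes of immersions $S^2\to\R^4$ being detected by the normal Euler class). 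This is precisely the step that identifies the lifted immersion of $L(4,1)$ with $f_4$ and hence $e\circ q$ with $f_4\circ\pi_4=g_2$ up to orientation; your phrase ``one checks that the universal cover of this neighborhood is regular homotopic to $F_4$'' names the needed fact but supplies no argument for it, and the Euler-number computation of \cite{massey1} is what fills that hole.
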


\begin{proof}
Let $\hat{e}\co{\R}P^2\hookrightarrow\R^4$ be an embedding.
Then, by \cite[\S2]{massey1}, the tubular neighborhood $N({\R}P^2)$ of $\hat{e}({\R}P^2)\subset\R^4$
is diffeomorphic to the total space of the non-orientable $D^2$-bundle over ${\R}P^2$
with Euler number $\pm2$ (see also \cite{b-j,price,p-r,yamada}).
Furthermore, 
the boundary $\partial{N}({\R}P^2)$ is known to be diffeomorphic to
the quaternion space $Q^3$ (see \cite{price}).
We thus obtain an embedding $e\co Q^3\hookrightarrow\R^4$.

For the $2$-fold covering $\rho\co S^2\to{\R}P^2$
and a suitable choice of the embedding $\hat{e}\co{\R}P^2\hookrightarrow\R^4$,
the composition $\hat{e}\circ\rho\co S^2\to\R^4$ becomes an immersion
with normal Euler class $4$.
Furthermore, if we denote by $\Gamma$ the quaternion group $\{\pm1,\pm i,\pm j,\pm k\}$ of order $8$
and put $\Z_4=\langle g\ST g^4=1\rangle$,
then from the sequence
\[\xymatrix{
\{1\}\ar[r]&\Z_4\ar[r]&\Gamma\ar[r]&\Z_2\ar[r]&\{1\},
}\]
we have the sequence of the coverings
\[\xymatrix{
S^3\ar[r]&L(4,1)=S^3/\Z_4\ar[r]&Q^3=S^3/\Gamma,
}\]
up to orientation \cite{yamada2}. Hence, we obtain the diagram
\[\xymatrix{
S^3\ar[rr]^-{\text{$4$-fold}}\ar[rrd]_-{\text{$8$-fold}}&&L(4,1)\ar[d]_-{\text{$2$-fold}}\ar[rr]_-{\text{Euler class $4$}}&&S^2\ar[d]_-{\rho}\ar[rrd]^-{F_4|_{0\text{-section}}}\\
&&Q^3\ar[rr]_-{\text{Euler number $2$}}&&{\R}P^2\ar[rr]_-{\hat{e}}&&\R^4,
}\]
in which the left part obtained by deleting the two rightmost arrows is commutative up to orientation, see \cite{lawson}, and where the rightmost triangle is commutative up to regular homotopy. The proposition follows.
\end{proof}

\section*{acknowledgments}
MT would like to thank Professor Yuichi Yamada
for illustrating various descriptions of the quaternion space and for valuable discussions, and Professor Takashi Nishimura for instructive discussions,
especially on singularities of smooth mappings. 

The authors thank an anonymous referee for careful reading of the manuscript, for correcting a sign error in Equation \eqref{eq:smale} and a mistake in the proof of Proposition \ref{prop:hirzebruch}, and for many other useful suggestions.

\end{document}